\numberwithin{equation}{section}
\newtheorem{theorem}{Theorem}
\newtheorem{definition}{Definition}[section]
\newtheorem{proposition}{Proposition}[section]
\newtheorem{lemma}{Lemma}[section]
\theoremstyle{definition}
\newtheorem{remark}{Remark}
\newtheorem*{ack}{Acknowledgement}
\def\re{\operatorname{Re}}
\def\Im{\operatorname{Im}}
\def\dens{\operatorname{dens}}
\def\meas{\operatorname{meas}}
\def\diam{\operatorname{diam}}
\def\dist{\operatorname{dist}}
\def\c{\operatorname{\mathbb C}}
\def\j{\operatorname{\mathcal{J}}}
\def\p{\operatorname{\mathcal{P}}}
\def\nr{\operatorname{\mathcal{N}\mathcal{R}}}
\begin{document}
\title{Perturbations of exponential maps: Non-recurrent dynamics}
\author{Magnus Aspenberg and Weiwei Cui}
\date{}
\maketitle

\begin{abstract}
We study perturbations of non-recurrent parameters in the exponential family. It is shown that the set of such parameters has Lebesgue measure zero. This particularly implies that the set of escaping parameters has Lebesgue measure zero,  which complements a result of Qiu from 1994. Moreover, we show that non-recurrent parameters can be approximated by hyperbolic ones.

\medskip
\noindent\emph{2020 Mathematics Subject Classification}: 37F10, 30D05.

\medskip
\noindent\emph{Keywords}: Exponential family, non-recurrent dynamics, perturbations.
\end{abstract}

\section{Introduction and main results}
In this paper we study the exponential family
$$f_{\lambda}(z)=\lambda e^z\quad\text{ for }\quad\lambda\in\c\setminus\{0\}.$$
Regarded as the simplest transcendental functions, they have attracted a lot of attention in transcendental dynamics since 1980s (see \cite{bergweiler1} for an introduction to the field). Considerable efforts have been put to explore this family by Devaney and his coauthors (see \cite{devaney1, devaney8} for instance), Baker and Rippon \cite{baker2},  Rempe \cite{rempethesis}, etc. Currently we have a good understanding of these functions both in the dynamical and parameter spaces. Several challenging problems are, however, still open up to now. For instance, it is unknown whether the bifurcation locus of the exponential family has Lebesgue measure zero. Our paper can be viewed as a contribution to this problem.

We will focus on parameters $\lambda$ for which the only singular value $0$ is in the Julia set and the $\omega$-limit set of $0$ does not contain itself. Such parameters are called \emph{non-recurrent}. We first show that non-recurrent dynamics are rare in the Lebesgue sense.

\begin{theorem}\label{maintheorem1}
The set of non-recurrent parameters in the exponential family has zero Lebesgue measure.
\end{theorem}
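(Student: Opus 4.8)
The plan is to show that, apart from a countable exceptional set, every non-recurrent parameter is ``expanding along the post-singular orbit'', and then to exploit this expansion through a parameter-exclusion argument of Benedicks--Carleson / Tsujii type, transferring a null-set statement from the dynamical plane to the parameter plane. Write $\xi_n(\lambda)=f_\lambda^n(0)$, so that $\xi_1(\lambda)=\lambda$, $\xi_{n+1}=\lambda e^{\xi_n}$, and, since $f_\lambda'(w)=f_\lambda(w)$,
\[
(f_\lambda^n)'(\xi_1(\lambda))=\prod_{k=2}^{n+1}\xi_k(\lambda).
\]
First remove the parameters for which $f_\lambda$ has a non-repelling cycle: for each period the multiplier is a non-constant analytic function on the analytic set $\{f_\lambda^p(z)=z\}$, so $\{|\text{multiplier}|=1\}$ projects to a countable union of real-analytic curves, a null set; after this removal every non-recurrent $\lambda$ has $J(f_\lambda)=\mathbb C$. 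For such $\lambda$, $\omega(0)$ is a compact forward-invariant subset of $\widehat{\mathbb C}$ containing neither $0$ nor a non-repelling cycle, so by a Ma\~n\'e-type theorem $\omega(0)\cap\mathbb C$ is hyperbolic; together with the (trivial) expansion near $\infty$ this yields $\delta(\lambda)>0$, $N(\lambda)\in\mathbb N$, $C(\lambda)>0$ and $\kappa(\lambda)>1$ with $\dist(\xi_n(\lambda),0)\ge\delta(\lambda)$ for $n\ge N(\lambda)$ and $|(f_\lambda^n)'(\xi_1(\lambda))|\ge C(\lambda)\kappa(\lambda)^n$. Partitioning $\nr$ into countably many pieces on which $\delta,N,C,\kappa,|\lambda|$ and $|\xi_N'(\lambda)|$ are controlled (the last is legitimate since each $\xi_n$ is a non-constant entire function, so its zero set is null), it suffices to show $|A|=0$ for each such piece $A$.

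\emph{Transversality and distortion.} Differentiating the recursion gives $\xi_{n+1}'=\xi_{n+1}\!\left(\tfrac1\lambda+\xi_n'\right)$, whence
\[
\frac{\xi_n'(\lambda)}{\xi_N'(\lambda)}=(f_\lambda^{\,n-N})'(\xi_N(\lambda))\prod_{k=N}^{n-1}\left(1+\frac{1}{\lambda\,\xi_k'(\lambda)}\right).
\]
On the piece $A$ the product converges and stays bounded away from $0$ and $\infty$, so $|\xi_n'(\lambda)|\asymp|(f_\lambda^n)'(\xi_1(\lambda))|$ uniformly on $A$. Combining this with (i) the univalence of the inverse branches of $f_\lambda$ along $\xi_1,\dots,\xi_n$ --- available because that orbit keeps distance $\ge\delta_0$ from the omitted value $0$ --- and (ii) a shadowing/perturbation comparison of $\lambda\mapsto\xi_n(\lambda)$ on a disc $B(\lambda_0,r)$ with $z\mapsto f_{\lambda_0}^n(z)$ near $z=\xi_1(\lambda_0)$, one obtains the key large-scale statement: for $r_n\asymp|\xi_n'(\lambda_0)|^{-1}$ the map $\xi_n$ is univalent on $B(\lambda_0,2r_n)$ with uniformly bounded distortion and $\xi_n(B(\lambda_0,r_n))\supseteq B(\xi_n(\lambda_0),\rho_0)$ for a fixed $\rho_0>0$ comparable to $\delta_0$. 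This is precisely the ``perturbation'' statement of the title, and it is the technical core of the argument.

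\emph{Transfer of a null set.} Suppose $|A|>0$ and let $\lambda_0\in A$ be a Lebesgue density point; assume first that the post-singular orbit of $\lambda_0$ returns infinitely often to a fixed bounded region $K$. For the times $n$ with $\xi_n(\lambda_0)\in K$, the disc $\xi_n(B(\lambda_0,r_n))$ contains $B(\xi_n(\lambda_0),\rho_0)$, centred in $K$. In the dynamical plane of $f_{\lambda_0}$ the set of points whose forward orbit avoids $B(0,\delta_0)$ forever is a uniformly expanding invariant set, hence Lebesgue-null (using $J(f_{\lambda_0})=\mathbb C$, or known ergodic properties of exponential maps with full Julia set); a compactness argument over centres in $K$ then gives $m_0\in\mathbb N$ and $\theta>0$ so that every disc $B(w,\rho_0)$ with $w\in K$ contains a subset of relative measure $\ge\theta$ of points whose $f_{\lambda_0}$-orbit meets $B(0,\delta_0)$ within $m_0$ steps. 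Pulling this subset back by the bounded-distortion map $\xi_n$ --- legitimate once $r_n$ is small enough that the $f_\lambda$-orbit still shadows the $f_{\lambda_0}$-orbit for $m_0$ steps, where the boundedness of $K$ is essential --- produces a subset of $B(\lambda_0,r_n)$ of relative measure $\ge\theta'>0$ consisting of parameters with $\dist(\xi_{n+j}(\lambda),0)<\delta_0$ for some $j\le m_0$, hence not in $A$. Letting $n\to\infty$ contradicts $\lambda_0$ being a density point.

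\emph{Escaping parameters and the main obstacle.} What remains, and is the genuinely delicate point, is the case where the post-singular orbit tends to $\infty$, i.e.\ the escaping parameters: there the shadowing above breaks down, because one step of $f_\lambda$ near a point of huge real part already moves by $|\lambda-\lambda_0|e^{\re(\cdot)}$, which is enormous for any fixed $|\lambda-\lambda_0|$. This case must be treated separately --- either by transferring, through the same transversality machinery, the fact that near $\infty$ a definite proportion of points have their image pushed back to small or negative real part (so that escape fails at the next step with definite probability), or by invoking the fine ``hair'' structure of the escaping parameter set --- in either case showing this set is null. I expect Step~2 (the uniform transversality and distortion estimate for the parameter map) to be the hardest and longest part, with the escaping case a close second; establishing null-ness of the escaping parameters is the new input that yields the stated corollary complementing Qiu's 1994 result.
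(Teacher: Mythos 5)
Your overall strategy is the same as the paper's: uniform expansion along the post-singular orbit (Benini's Ma\~n\'e-type result), a phase--parameter transversality giving $|\xi_n'(\lambda)|\asymp|(f_\lambda^n)'(0)|$, bounded distortion of $\xi_n$ up to a fixed ``large scale'', and then a density-point exclusion. (The preliminary removal of non-repelling cycles is superfluous: non-recurrence already requires $0\in\j(f_\lambda)$, which rules out attracting and parabolic cycles, and Siegel discs are excluded by Rempe--van Strien.) However, there are two genuine gaps. The first is the transversality step: your assertion that the product $\prod_{k\ge N}\bigl(1+\tfrac{1}{\lambda\,\xi_k'(\lambda)}\bigr)$ ``converges and stays bounded away from $0$'' is precisely the statement that the Levin-type constant $L_\lambda=\sum_j 1/(f_\lambda^j)'(0)$ is finite \emph{and nonzero}. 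Finiteness follows from summability, i.e.\ from the expansion, but non-vanishing is a deep transversality theorem (the paper cites Urba\'nski--Zdunik for exactly this), and it cannot be arranged by your countable partition: the set $\{L_\lambda=0\}$ would simply be omitted from the union of pieces, and there is no a priori reason it is null. A bootstrap from a lower bound on $|\xi_N'|$ does not close either, since the required largeness of $|\xi_N'|$ is not at your disposal. Without $L_\lambda\neq 0$ the comparability of parameter and phase derivatives fails and the entire large-scale argument collapses.

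The second gap is the escaping case, which you explicitly leave open --- yet this is the main new content of the theorem (the complement to Qiu's result). The paper never splits into cases: its large-scale lemma produces, at the first time $n$ when $\xi_n(D(\lambda_0,r))$ has diameter $S$, a round disc $V=D(\xi_n(\lambda_0),S/4)$ still contained in the expanding neighbourhood $\mathcal N$ of $\p(g_{\lambda_0})$; Baker's blow-up lemma then yields a finite $N$ with $g_{\lambda_0}^N(V)$ covering a fixed annulus around $0$, and the bounded distortion of $\xi_n$ transfers a definite proportion of parameters in $D(\lambda_0,r)$ into $D(0,\Delta)$, excluding them from $\nr_\Delta$. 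No infinitely-many returns to a compact set are required, so bounded, oscillating and escaping singular orbits are treated on the same footing. Relatedly, your justification ``uniformly expanding invariant set, hence Lebesgue-null'' is false for unbounded invariant sets of transcendental maps --- McMullen's theorem that the escaping set of $\lambda\sin z$ has positive area is the standard counterexample, and the paper even quotes it --- so the uniform-proportion statement you need over centres in $K$ should instead be extracted from Baker's covering lemma together with Koebe distortion, not from expansion alone.
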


We make several remarks on this result. Let $\j(f_{\lambda})$ be the Julia set of $f_{\lambda}$.
\begin{remark}~
\begin{itemize}
\item[$\bullet$] Bade\'nska considered parameters in the exponential family with bounded post-singular sets (called \emph{post-singularly bounded parameters}) and proved a similar result \cite[Theorem 1]{badenska1} for such parameters. This implies directly that such parameters are non-recurrent in the above sense. So our theorem gives a generalization of her result.

\item[$\bullet$] A parameter $\lambda$ is \emph{escaping} if $f^{n}_{\lambda}(0)\to\infty$ as $n\to\infty$. Qiu proved in 1994 that the set of escaping parameters has Hausdorff dimension two \cite{qiu2}. It was, however, unknown since then whether this set has Lebesgue measure zero (which is the motivation of the present paper). Our result confirms this and thus complements his result, since escaping parameters are also non-recurrent. However, escaping parameters may not be rare in the above sense for general families of transcendental functions. In \cite{qiu2} it is shown that the set of escaping parameters in the sine family $\{\lambda\sin(z):\lambda\in\c\setminus\{0\}\}$ has positive Lebesgue measure.

\item[$\bullet$] Non-recurrent parameters are considerably more general due to the non-compactness of the phase space. In the above settings, the singular value either has a bounded orbit or tends to $\infty$ under iterates. Non-recurrent parameters, however, can be post-singularly unbounded without being escaping. In other words, the singular orbit could possibly oscillate between some compact set and $\infty$. So in this sense our result is more general.

\item[$\bullet$] For non-recurrent parameters, we note the following difference between escaping and non-escaping ones. For non-escaping non-recurrent maps, the singular value belongs to the radial Julia set (and hence can go from small scales to large scales by univalent iterates); see \cite[Section 3]{rempe5} and \cite{rempe17} for a discussion and related results on this set. While for escaping parameters, the singular value does not lie in this set. (We are grateful to Lasse Rempe for making this comment.)

\end{itemize}
\end{remark}

\begin{remark}
In the study of exponential dynamics, one is often led to a comparison with the (complex) quadratic family $z\mapsto z^2+c$ which gives rise to the famous Mandelbrot set. For quadratic polynomials, non-recurrence is equivalent to requiring the post-critical set to be a hyperbolic set. However, for exponential maps, that the post-singular set is a hyperbolic set means that the map is post-singularly bounded. Thus, the notion of non-recurrence is weaker in the exponential setting.
\end{remark}

\smallskip
A parameter $\lambda$ is \emph{hyperbolic} if $f_{\lambda}$ has an attracting cycle. One of the main problems in the dynamics of exponential maps is whether hyperbolic exponential maps are dense in the parameter space (\emph{density of hyperbolicity}). This is still widely open. Equivalently, one asks if every non-hyperbolic map can be approximated by hyperbolic maps.  As an application of the argument used in the proof of Theorem \ref{maintheorem1}, we show that this is true for all non-recurrent parameters.

\begin{theorem}\label{maintheorem2}
Every non-recurrent exponential map can be approximated by hyperbolic maps.
\end{theorem}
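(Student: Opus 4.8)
The plan is to construct, arbitrarily near a given non-recurrent $\lambda_0$, a parameter $\lambda$ whose singular orbit returns so close to $0$ that $f_\lambda$ is forced to have an attracting cycle. We may assume $\lambda_0$ is not hyperbolic, so $0\in\j(f_{\lambda_0})$ and, by non-recurrence, $|f^n_{\lambda_0}(0)|\ge\delta$ for all $n\ge 1$ and some $\delta>0$; in the non-escaping case the post-singular orbit then lies in a fixed compact subset of $\c\setminus\{0\}$ on which $f_{\lambda_0}$ expands uniformly. From the proof of Theorem~\ref{maintheorem1} I would take two estimates: a \emph{transversality estimate}, that there is no cancellation in the series defining $\partial_\lambda f^n_\lambda(0)$, so that at $\lambda_0$ one has $|\partial_\lambda f^n_\lambda(0)|\asymp|(f^{n-1}_{\lambda_0})'(f_{\lambda_0}(0))|$, which grows exponentially in $n$; and a \emph{distortion estimate}, that with $r_n\asymp|\partial_\lambda f^n_\lambda(0)|^{-1}$ the map $\lambda\mapsto f^n_\lambda(0)$ is univalent on $B(\lambda_0,r_n)$ with absolutely bounded distortion, hence by the Koebe theorem its image contains a disk $B(f^n_{\lambda_0}(0),\kappa)$ of a fixed radius $\kappa>0$.

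Two steps then remain. The first is a local criterion for hyperbolicity: if $|(f^N_\lambda)'(0)|$ is smaller than a suitable absolute constant for some $N$, then $f_\lambda$ is hyperbolic. Since $f'_\lambda\equiv f_\lambda$, the number $(f^N_\lambda)'(0)$ equals the product $f_\lambda(0)\cdots f^N_\lambda(0)$, so its smallness forces $f^N_\lambda(0)$ to be extremely close to $0$; pulling back the disk $D(f^N_\lambda(0),\tfrac12|f^N_\lambda(0)|)$ — which omits $0$ — by the branch of $f^{-N}_\lambda$ following the orbit $0,f_\lambda(0),\dots,f^{N-1}_\lambda(0)$, bounded distortion along this expanding orbit makes the branch univalent and carries the disk onto a Jordan domain $D_0\ni 0$ with $\overline{f^N_\lambda(D_0)}\subset D_0$, so $f^N_\lambda$ has an attracting fixed point in $D_0$ and $f_\lambda$ has an attracting cycle. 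The second step produces $\lambda$ near $\lambda_0$ meeting the criterion. Making $f^N_\lambda(0)$ tiny forces $\re f^{N-1}_\lambda(0)$ very negative, because $f^N_\lambda(0)=\lambda e^{f^{N-1}_\lambda(0)}$ and $f_\lambda(\{\re z<-L\})=\{0<|w|<|\lambda|e^{-L}\}$; so the task is to steer the singular orbit into a deep excursion to the left. Pick $n$ large with $f^n_{\lambda_0}(0)$ near a point $a\in\omega(0)\subseteq\j(f_{\lambda_0})$; since $\j(f_{\lambda_0})$ is unbounded to the left (because $0\in\j(f_{\lambda_0})$) and iterating $f_{\lambda_0}$ drives points to the left extremely fast, there is by transitivity a point $p\in B(a,\kappa/2)\cap\j(f_{\lambda_0})$ whose $f_{\lambda_0}$-orbit reaches $\{\re z<-L\}$ within $k=o(n)$ steps, where $L$ is a prescribed large multiple of $n$. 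By the covering property choose $\lambda\in B(\lambda_0,r_n)$ with $f^n_\lambda(0)=p$. Since $|\lambda-\lambda_0|\le r_n$ is exponentially small in $n$ while $k=o(n)$, the orbit of $0$ under $f_\lambda$ shadows that of $p$ under $f_{\lambda_0}$ over the next $k$ steps, so $\re f^{n+k}_\lambda(0)<-L/2$ and $|f^{n+k+1}_\lambda(0)|<|\lambda|e^{-L/2}$. The expansion $|(f^{n+k}_\lambda)'(0)|$ accumulated up to this point is at most exponential in $n$ with a fixed base (the orbit spends $n$ bounded steps and then $o(n)$ steps in the excursion), so with $N=n+k+1$ and $L$ chosen large enough the criterion holds and $\lambda$ is hyperbolic, with $|\lambda-\lambda_0|\le r_n\to 0$.

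The genuinely hard input is the transversality and bounded-distortion estimates of the first paragraph: along a non-recurrent post-singular orbit — which may make arbitrarily large excursions toward $\infty$ — one must exclude cancellation in $\partial_\lambda f^n_\lambda(0)$, control the inverse branches of $f_\lambda$ near infinity, and keep track of the orbit's near-returns to itself, which is precisely the technical heart shared with the proof of Theorem~\ref{maintheorem1}. The construction itself is comparatively soft; the care needed is the bookkeeping that the excursion depth outweighs the accumulated expansion, and the escaping case, where the orbit lives near $\infty$ and the estimates must be rephrased relative to the distance to $0$. Alternatively, for escaping $\lambda_0$ one may simply observe, using Theorem~\ref{maintheorem1} itself, that $\{\lambda\mapsto f^n_\lambda(0)\}$ cannot be normal near $\lambda_0$ — a normal limit would have to be $\equiv\infty$, forcing an open, hence positive-measure, set of escaping parameters — whence $\lambda_0$ lies in the bifurcation locus and is a limit of parabolic, and therefore of hyperbolic, parameters.
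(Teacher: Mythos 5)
Your main construction is essentially the paper's own proof: combine the large-scale covering (Lemma \ref{scale}) and strong distortion (Lemma \ref{stron}) from the proof of Theorem \ref{maintheorem1} with Baker's blow-up lemma to steer the singular orbit, within a few further steps, into a left half-plane whose depth dominates the logarithm of the accumulated derivative, and then conclude by a Koebe pullback that a small disk about $0$ is mapped into itself, producing an attracting cycle. The paper implements your ``transitivity'' step explicitly, by a chain of unit disks marching rightward along the line $\{\Im z=-\arg\lambda_0\}$ with tower-of-exponentials growth and a final shift by $\pi i$; this explicit chain is what guarantees, uniformly in the escaping and non-escaping cases, that the excursion depth $\re z_{p+1}>e^{\re z_p/2}$ beats the accumulated expansion $|Df_\lambda^{m}(0)|\le(|\lambda|e^{\re z_p})^{m}$ --- i.e.\ your requirement that the depth $L$ exceed a large multiple of $\log|(f^N_\lambda)'(0)|$, not merely of $n$, which suffices only when the post-singular orbit is bounded.

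One piece of your proposal is wrong: the closing ``alternative'' for escaping $\lambda_0$. If $\{\lambda\mapsto f_\lambda^n(0)\}$ were normal near an escaping $\lambda_0$, the subsequential limits would indeed be $\equiv\infty$, but locally uniform convergence to $\infty$ along a subsequence of times does not make nearby parameters escaping (having large modulus at infinitely many times is not escape), so you obtain no open or positive-measure set of escaping parameters and hence no contradiction with Theorem \ref{maintheorem1}. Since that route is dispensable --- your main argument, bookkept as above, covers the escaping case --- the proposal stands, but that final paragraph should be removed.
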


The result was partially known for some non-recurrent parameters; see, for instance, Devaney \cite{devaney12}, Zhou and Li \cite{zhou-li} and Ye \cite{yezhuan1}. Indeed, rigidity implies that such parameters can be approximated by hyperbolic ones. For post-singularly finite exponential maps, rigidity was known; see Benini \cite{benini4}.  For escaping parameters this was proved by Rempe in \cite{rempe12}.

\begin{remark}
Dobbs proved a result for post-singularly bounded parameters \cite[Main Theorem]{dobbs1}, saying that such parameters are Lebesgue density points of the set of hyperbolic maps. It is plausible that this is also true for non-recurrent parameters.
\end{remark}

\smallskip

We have mainly focused on non-recurrent exponential maps and their relevant properties in this paper. The parameter space of the exponential family has a rich structure and has attracted a lot of attention. Without going into any further details, we refer to \cite{eremenko2, devaney15, qiu2, urbanski4, rempe3, schleicher12, benini3, benini4, levin6, bergweiler25} and references therein.

\section{Preliminaries}

In this section we give some preliminary results for non-recurrent exponential maps. These include uniform expansion on the post-singular set and the existence of a holomorphic motion along this orbit.

First we give some notations which will be used throughout this paper. We will use $D(a,r)$ for a Euclidean disk of radius $r$ centered at $a$. Let $\dist(z, A)$ be the Euclidean distance of a point $z$ to a set $A$. The term $Df_{\lambda}^{n}(z)$ will always mean the derivative of $f_{\lambda}^{n}$ at the point $z$ (i.e., the phase derivative). Sometimes we also use $(f^{n}_{\lambda})'(z)$ as the same meaning for $Df^{n}_{\lambda}(z)$. On the other hand, $\partial_{\lambda} f_{\lambda}(z)$ will mean the derivative in $\lambda$ (i.e., the parameter derivative). The complex plane is always denoted by $\c$. Moreover, $\meas A$ is the two-dimensional Lebesgue measure of a set $A\subset \c$ and $\dens(A,B)$ is the density of $A$ in $B$; i.e., $\dens(A, B)=\meas(A\cap B)/\meas(B)$. By $a\sim b$ we mean that there exists a constant $C>0$ such that $\frac{1}{C} b\leq a \leq C b$. 

\medskip
Let $f_{\lambda}$ be an exponential map. The \emph{post-singular set} of $f_{\lambda}$ is defined as
$$\p(f_{\lambda}):=\overline{\bigcup_{n\geq 0}f_{\lambda}^{n}(0)}.$$

Recall that $\lambda$ is called \emph{non-recurrent} if $0\in\j(f_{\lambda})$ and $0\not\in\omega(0)$. In other words, there exists $\Delta>0$ such that
\begin{equation}\label{dnr}
\p(f_{\lambda})\cap D(0,\Delta)=\{0\}.
\end{equation}

Note that non-recurrent exponential maps cannot have attracting nor parab-olic cycles. That Siegel disks do not exist was proved by Rempe and van Strien \cite[Corollary 2.10]{rempe5}. So we see that non-recurrent exponential maps have empty Fatou sets. 

\begin{definition}\label{def21}
$f_{\lambda}$ is said to be $\Delta$-non-recurrent if there exists $\Delta>0$ such that \eqref{dnr} holds. Equivalently, we say that such a parameter $\lambda$ is $\Delta$-non-recurrent.
\end{definition}

One of the ingredients in the proof of our theorem is the expansion along the post-singular set of non-recurrent maps. The following result was proved by Benini \cite[Corollary A]{benini3}.

\begin{lemma}\label{lem21}
Let $\lambda$ be a non-recurrent parameter. Then there exist $\widetilde{N}\in\mathbb{N}$ and $\tilde{\gamma}>1$ such that for any $k\geq \widetilde{N}$ and for any $z\in\p(f_{\lambda})$ we have
$$|Df_{\lambda}^{k}(z)|>\tilde{\gamma}.$$
\end{lemma}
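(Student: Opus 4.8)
The plan is to reduce the statement to a Mañé-type hyperbolicity theorem. Recall that $\lambda$ being non-recurrent means $0 \notin \omega(0)$, so by \eqref{dnr} there is $\Delta > 0$ with $\p(f_\lambda) \cap D(0,\Delta) = \{0\}$. Set $K := \overline{\bigcup_{n \geq 1} f_\lambda^n(0)}$, the forward orbit closure of the singular value $f_\lambda(0) = \lambda$. The key structural observation is that $K$ is a forward-invariant compact subset of the Julia set that is \emph{uniformly away from the singular value}: since $0 \notin \omega(0)$, the point $0$ is not an accumulation point of $\{f_\lambda^n(0): n \geq 1\}$, hence $\dist(K, 0) \geq \delta$ for some $\delta > 0$ (one must check $0 \notin K$ itself; if $0$ were in $K$ it would be in $\omega(0)$ since the orbit is infinite, contradiction — here one uses that $0 \in \j(f_\lambda)$ so the singular orbit is infinite and never hits $0$). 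Moreover $K$ contains no periodic point that is parabolic or attracting (non-recurrent exponential maps have empty Fatou set, as noted after Definition \ref{def21}), and it contains no Siegel disk boundary. So $K$ is a compact invariant set on which $f_\lambda$ is "expanding in the Mañé sense."

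The main step is then to invoke the transcendental analogue of Mañé's theorem: a compact forward-invariant subset of the Julia set of an exponential map that is bounded away from the singular value $0$ and contains no indifferent or attracting cycles must be a hyperbolic set, i.e., there are $C > 0$ and $\gamma_0 > 1$ with $|Df_\lambda^n(z)| \geq C\gamma_0^n$ for all $z \in K$ and $n \geq 0$. This is precisely the content cited as Benini \cite[Corollary A]{benini3} (the exponential-family version of results going back to Mañé, with the "bounded away from the singular value" hypothesis replacing compactness of the phase space). Granting this, choose $\widetilde{N}$ so large that $C\gamma_0^{\widetilde N} > 1$; then for any $k \geq \widetilde N$ and any $z \in K$ we get $|Df_\lambda^k(z)| > C\gamma_0^{\widetilde N} =: \tilde\gamma > 1$.

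It remains to upgrade the conclusion from $K$ to all of $\p(f_\lambda) = \{0\} \cup K$, i.e., to include the point $z = 0$ itself. For $z = 0$ we have $|Df_\lambda^k(0)| = |Df_\lambda^{k-1}(f_\lambda(0))| \cdot |f_\lambda'(0)| = |Df_\lambda^{k-1}(\lambda)| \cdot |\lambda|$, and $\lambda = f_\lambda(0) \in K$. So if $C\gamma_0^{k-1}|\lambda| > 1$ we are done at $0$ as well; enlarging $\widetilde N$ if necessary (depending on $|\lambda|$, which is harmless since $\lambda$ is fixed throughout the lemma) handles this. Taking the larger of the two thresholds for $\widetilde N$ and the smaller $\tilde\gamma > 1$ gives the statement uniformly over $z \in \p(f_\lambda)$.

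The main obstacle is the second step: establishing, or correctly quoting, the Mañé-type hyperbolicity statement in the transcendental setting, where the phase space $\c$ is non-compact and $\infty$ is an essential singularity. The subtlety is that Mañé's classical argument uses compactness of $\widehat{\c}$ crucially; its transcendental analogue requires instead that $K$ stay bounded away from the singular value $0$ (so that inverse branches of $f_\lambda$ are well-defined and uniformly contracting near $K$ — this is where $\dist(K,0) \geq \delta$ enters) together with control coming from the finite order / class $\mathcal{B}$ structure of $f_\lambda$. Since the excerpt explicitly attributes the lemma to Benini, in the write-up I would simply cite \cite{benini3} for this step and devote the actual proof text to verifying that her hypotheses apply — namely that $K$ is compact, forward-invariant, contained in $\j(f_\lambda)$, disjoint from a neighborhood of $0$, and free of non-repelling cycles — and then to the elementary bookkeeping of the previous paragraph that extends the bound to the point $0$ and converts "$C\gamma_0^n$" into "$> \tilde\gamma$ for $k \geq \widetilde N$."
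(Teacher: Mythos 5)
The paper itself gives no proof of this lemma: it is quoted verbatim from Benini \cite[Corollary A]{benini3}, so your final decision to "simply cite \cite{benini3}" coincides with what the authors do. The problem is with the reduction you wrap around that citation. Your key structural claim --- that $K=\overline{\bigcup_{n\ge1}f_\lambda^n(0)}$ is a \emph{compact} forward-invariant set --- is false for general non-recurrent parameters. Non-recurrence only forbids $0\in\omega(0)$; it does not force the singular orbit to be bounded. Escaping parameters (where $f_\lambda^n(0)\to\infty$) and oscillating parameters (where the orbit accumulates both on a compact set and at $\infty$) are non-recurrent with unbounded post-singular set, and the paper's own remarks after Theorem \ref{maintheorem1} emphasize that this is precisely the extra generality over Bade\'nska's post-singularly bounded case. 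Consequently, a Ma\~n\'e-type theorem for compact invariant sets (such as the Rempe--van Strien version in \cite{rempe5}) does not apply to $K$ as you describe; if your argument were taken literally it would only prove the lemma for post-singularly bounded parameters, which is the already-known case. Benini's Corollary A is stated directly for the (possibly unbounded) post-singular set of a non-recurrent exponential map, and the substance of her proof is exactly the handling of the non-compactness near the essential singularity; so your "verification of hypotheses" checks a hypothesis ($K$ compact) that neither holds nor is required, while the genuinely needed uniform estimates near $\infty$ are not addressed.

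The peripheral bookkeeping in your proposal is fine: $0\notin K$ follows from non-recurrence together with $0\in\j(f_\lambda)$ exactly as you say, the separation $\dist(K,0)\ge\Delta$ is immediate from \eqref{dnr}, and the extension of the bound from $K$ to the point $z=0$ via $|Df_\lambda^k(0)|=|\lambda|\,|Df_\lambda^{k-1}(\lambda)|$, followed by enlarging $\widetilde N$ to convert $C\gamma_0^k$ into a bound $>\tilde\gamma$, is correct and harmless. But the central step needs to be either an honest citation of Benini's result as stated (for the unbounded post-singular set), or a proof that confronts the non-compactness; as written, the reduction to a compact hyperbolic set is a genuine gap.
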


Let $\lambda$ be a $\Delta$-non-recurrent parameter. Let $\widetilde{N}, \tilde{\gamma}$ be as in the above lemma. For any integer $n$ there exist $k$ and $j\in\mathbb{N}$ such that $n=k\widetilde{N}+j$, where $k\geq 0$ and $0\leq j<\widetilde{N}$. So for any $z\in\p(f_{\lambda})$ we see from the above lemma that
\begin{equation}\label{efn}
\begin{aligned}
\left|Df_{\lambda}^{n}(z)\right|=\left|Df_{\lambda}^{k\widetilde{N}+j}(z)\right|&=\left|Df_{\lambda}^{k\widetilde{N}}(f_{\lambda}^{j}(z))\right|\left|Df_{\lambda}^{j}(z)\right|\\
&=\prod_{m=0}^{k-1}\left|Df_{\lambda}^{\widetilde{N}}(f_{\lambda}^{m\widetilde{N}+j}(z))\right|\left|Df_{\lambda}^j(z)\right|\\
&\geq \tilde{\gamma}^{k}|Df_{\lambda}^j(z)|=\tilde{\gamma}^{k}\prod_{i=1}^{j}|f_{\lambda}^i(z)|.
\end{aligned}
\end{equation}
Since $f_{\lambda}$ is $\Delta$-non-recurrent, we have $\prod_{i=1}^{j}|f_{\lambda}^i(z)|\geq \Delta^{j}$. Put
$$\tilde{C}=\min\left\{1,\,\left(\frac{\Delta}{\tilde{\gamma}}\right)^{\widetilde{N}}\right\}.$$
We obtain from \eqref{efn} that
\begin{equation}\label{uefn}
\begin{aligned}
\left|Df_{\lambda}^{n}(z)\right|\geq \tilde{C}\tilde{\gamma}^k=\tilde{C}\tilde{\gamma}^{(n-j)/\widetilde{N}}\geq\tilde{C}\tilde{\gamma}^{(n-\widetilde{N})/\widetilde{N}}=\frac{\tilde{C}}{\tilde{\gamma}}\tilde{\gamma}^{n/\widetilde{N}}=:C_1\gamma_{1}^{n},
\end{aligned}
\end{equation}
where $\gamma_1>1$ and $C_1>0$. 

More generally, a parameter $\lambda$ is said to be \emph{summable}, if $0\in\j(f_{\lambda})$ and
$$\sum_{n=0}^{\infty}\frac{1}{\left|Df_{\lambda}^{n}(0)\right|}<\infty.$$
See \cite{urbanski4}. Therefore, for non-recurrent parameter $\lambda$, by taking $z=0$ in \eqref{uefn} we see that
$$\sum_{n=0}^{\infty}\frac{1}{|Df_{\lambda}^n(0)|}\leq\sum_{n=0}^{\infty}\frac{1}{C_1 \gamma_{1}^{n}}<\infty.$$
In other words, we have
\begin{lemma}\label{sum1}
Non-recurrent parameters are summable.
\end{lemma}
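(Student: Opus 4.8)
The plan is to deduce Lemma \ref{sum1} directly from the uniform expansion estimate \eqref{uefn} that has already been established for non-recurrent parameters, so that almost no new work is required. The key observation is that \eqref{uefn} provides a geometric lower bound $|Df_{\lambda}^{n}(0)|\geq C_1\gamma_1^{n}$ for all $n\geq\widetilde{N}$, with $\gamma_1>1$, and this is exactly what is needed to make the summability series converge.

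The steps would be as follows. First, I would split the sum defining summability as
$$\sum_{n=0}^{\infty}\frac{1}{|Df_{\lambda}^{n}(0)|}=\sum_{n=0}^{\widetilde{N}-1}\frac{1}{|Df_{\lambda}^{n}(0)|}+\sum_{n=\widetilde{N}}^{\infty}\frac{1}{|Df_{\lambda}^{n}(0)|}.$$
The first (finite) sum is trivially finite, since each term $|Df_{\lambda}^{n}(0)|$ is a nonzero finite number — nonzero because $f_{\lambda}$ has no critical points (the derivative $f_{\lambda}'(z)=\lambda e^{z}$ never vanishes) and $0\in\j(f_{\lambda})$ means the orbit of $0$ never hits the (nonexistent) critical set, so $Df_{\lambda}^{n}(0)=\prod_{m=0}^{n-1}f_{\lambda}'(f_{\lambda}^{m}(0))\neq 0$. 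For the tail, I would apply \eqref{uefn} term by term to get
$$\sum_{n=\widetilde{N}}^{\infty}\frac{1}{|Df_{\lambda}^{n}(0)|}\leq\sum_{n=\widetilde{N}}^{\infty}\frac{1}{C_1\gamma_1^{n}}=\frac{1}{C_1}\cdot\frac{\gamma_1^{-\widetilde{N}}}{1-\gamma_1^{-1}}<\infty,$$
which is a convergent geometric series because $\gamma_1>1$. Combining the two pieces shows the full series is finite, and since by definition $0\in\j(f_{\lambda})$ for a non-recurrent parameter, this is precisely the summability condition. Hence non-recurrent parameters are summable.

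There is essentially no obstacle here: the entire content has been front-loaded into Lemma \ref{lem21} and the derivation of \eqref{uefn}. The only point one must be slightly careful about is the finiteness of the initial finite block of terms, which amounts to observing that the singular orbit of a non-recurrent parameter consists of finite points (indeed $0\in\j(f_{\lambda})$ rules out the orbit escaping in finitely many steps, and $f_{\lambda}^{m}(0)$ is always a well-defined complex number) and that $f_{\lambda}$ has no critical points so no derivative factor can vanish. Everything else is a one-line geometric-series estimate.
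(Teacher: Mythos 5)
Your proposal is correct and follows exactly the paper's own argument: the paper also splits the series at $n=\widetilde{N}$ and bounds the tail by a geometric series using \eqref{uefn}. The additional remarks about the nonvanishing of the finite initial block are fine (and implicit in the paper).
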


\eqref{uefn} tells that $f_{\lambda}$ is uniformly expanding along $\p(f_{\lambda})$. (We say that a map $f$ is \emph{uniformly expanding} on a closed set $E\subset\c$ (which could be unbounded), if there exist $C>0$ and $\gamma>1$ such that $|Df^{n}(z)|\geq C \gamma^n$ for all $n$ and for all $z\in E$. If $E$ is also bounded, then such a set is often called a hyperbolic set.) 

\medskip
Let $\lambda_0$ be non-recurrent. Then there exists $\eta>0$ such that $|Df^{n}_{\lambda_0}(z)|>C_1\gamma_{1}^n$ for all $z\in \mathcal{N}_0$, where $\mathcal{N}_0:=D(\p(f_{\lambda_0}), 100\eta)$ is a neighborhood of $\p(f_{\lambda_0})$. By continuity, parameters close to $\lambda_0$ also enjoy this property (but with possibly a slightly smaller exponent). So we have the following.

\begin{lemma}\label{exue}
Let $\lambda_0$ be a non-recurrent parameter. Then there exist $r_0$ sufficiently small and constants $C>0$, $\gamma>1$ such that for all $r\leq r_0$ and for all $\lambda\in D(\lambda_0,r)$, whenever $f_{\lambda}^j(z)\in\mathcal{N}:=D(\p(f_{\lambda_0}), 50\eta)$ for all $j\leq n$ we have
$$|Df_{\lambda}^n(z)|\geq C\gamma^n.$$
\end{lemma}

The uniform expansion also implies the existence of a holomorphic motion over $\p(f_{\lambda_0})$ in a neighborhood of the parameter $\lambda_0$. 

\begin{lemma}[Holomorphic motion]\label{hm}
Let $\lambda_0$ be non-recurrent. Then for sufficiently small $r>0$ there exists a holomorphic motion
$$h: D(\lambda_0, r)\times \p(f_{\lambda_0})\to \c$$
such that $h_{\lambda_0}$ is identity, $h$ is holomorphic in $\lambda$ and injective in $z$. For all $\lambda\in D(\lambda_0,r)$, we have
$$h_{\lambda}\circ f_{\lambda_0}=f_{\lambda}\circ h_{\lambda}$$
for $z\in \p(f_{\lambda_0})$. Moreover, there exists a constant $\alpha>0$ such that $|h_{\lambda}(z)-z|\leq \alpha|\lambda-\lambda_0|$ for any $z\in\p(f_{\lambda_0})$ and $\lambda\in D(\lambda_0, r)$.
\end{lemma}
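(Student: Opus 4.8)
The plan is to construct the holomorphic motion by the standard recipe of moving the orbit points holomorphically and then invoking the $\lambda$-lemma to extend and guarantee continuity/injectivity, with the uniform expansion from Lemma \ref{exue} doing the work of making the orbit map well-defined and contracting in the right sense. Write $z_n := g_{\lambda_0}^n(0)$, so that $\p(g_{\lambda_0}) = \overline{\{z_n : n \geq 0\}}$. The goal is to define $h_\lambda(z_n)$ for each $n$ as a holomorphic function of $\lambda$ on some disk $D(\lambda_0, r)$, satisfying $h_\lambda(z_0) = 0$ (since $0$ is the singular value and is fixed in the sense that it is the marked point of $g_\lambda$) and the equivariance $h_\lambda(g_{\lambda_0}(z_n)) = g_\lambda(h_\lambda(z_n))$, and then to check that the family $\{h_\lambda(z_n)\}_n$ is equicontinuous/uniformly bounded so that it extends continuously to the closure.

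First I would set up the iterated construction: since $g_\lambda = f_\lambda^{\widetilde N}$ and $0$ is the unique singular value, the point $g_\lambda(0)$ depends holomorphically on $\lambda$ with $g_{\lambda_0}(0) = z_1$, so define $h_\lambda(z_1) := g_\lambda(0)$. Inductively, having defined $h_\lambda(z_n)$ holomorphically with $h_{\lambda_0}(z_n) = z_n$, set $h_\lambda(z_{n+1}) := g_\lambda(h_\lambda(z_n))$; this is automatically holomorphic in $\lambda$ and satisfies the functional equation. The content is to show these maps stay in the neighborhood $\mathcal N = D(\p(g_{\lambda_0}), 50\eta)$ and that $\lambda \mapsto h_\lambda(z_n)$ does not run off to infinity or collide. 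For this I would shrink $r$ and argue by an expansion/telescoping estimate: because $g_\lambda$ is uniformly expanding along the pseudo-orbit as long as it stays in $\mathcal N$ (Lemma \ref{exue}), the derivative $\partial_\lambda h_\lambda(z_n)$ can be controlled — one gets a recursion $\partial_\lambda h_\lambda(z_{n+1}) = Dg_\lambda(h_\lambda(z_n)) \cdot \partial_\lambda h_\lambda(z_n) + \partial_\lambda g_\lambda(h_\lambda(z_n))$, and dividing through by $Dg_\lambda^{n+1}$ and summing, the geometric factor $C\gamma^n$ from \eqref{ue2} beats the (at most exponentially growing, but with smaller rate, using \eqref{uefn}) size of the inhomogeneous term, giving a uniform bound $|h_\lambda(z_n) - z_n| \leq \epsilon$ for $\lambda \in D(\lambda_0, r)$ with $\epsilon$ as small as we like by taking $r$ small. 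This simultaneously keeps the orbit inside $\mathcal N$ (closing the induction) and shows the family $\{h_\lambda(z_n)\}$ is uniformly bounded on compact subsets of $\lambda$-space.

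Having the maps $h_\lambda$ defined on the countable dense set $\{z_n\}$, holomorphic in $\lambda$, with $h_{\lambda_0} = \mathrm{id}$, the $\lambda$-lemma of Mañé–Sad–Sullivan (in the strong form of Słodkowski, or just the original version since we only need it on the closure of a set) applies: a family of injections of a set into $\c$ that is holomorphic in the parameter and reduces to the identity at the base point is automatically a holomorphic motion, extends to the closure $\p(g_{\lambda_0})$, and the extension is automatically continuous in $(\lambda, z)$ jointly and injective in $z$ for each fixed $\lambda$. The only thing to verify before applying it is injectivity of $z_m \mapsto h_\lambda(z_m)$ for $\lambda = \lambda_0$, which is trivial, plus the standard fact that holomorphicity in $\lambda$ on a dense subset plus the uniform bound passes the injectivity to all $\lambda$ near $\lambda_0$ — this is exactly the content of the $\lambda$-lemma, so nothing extra is needed. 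Finally the equivariance $h_\lambda \circ g_{\lambda_0} = g_\lambda \circ h_\lambda$ holds on the dense set $\{z_n\}$ by construction and extends to the closure by continuity of both sides.

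The main obstacle is the estimate that keeps the perturbed orbit $\{h_\lambda(z_n)\}_n$ trapped in the neighborhood $\mathcal N$ for all $n$ simultaneously: this is a shadowing-type argument and the subtlety is that the exponential family has non-compact phase space, so the orbit $\{z_n\}$ itself is unbounded and one cannot simply appeal to compactness. One must use that the expansion rate $\gamma$ in \eqref{ue2} strictly dominates the growth rate of the inhomogeneous term $\partial_\lambda g_\lambda(h_\lambda(z_n))$ along the orbit — and here the key input is precisely the summability/uniform expansion package built in \eqref{uefn} and Lemma \ref{exue}, together with the elementary observation that $|\partial_\lambda g_\lambda(w)|$ is comparable to $|g_\lambda(w)|/|\lambda|$ which in turn is comparable to $|Dg_\lambda(w)|/|\lambda|$ for the exponential map (since $f_\lambda'(z) = f_\lambda(z)$), so the inhomogeneous term is absorbed by one extra factor of the derivative and the geometric series converges. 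Once the trapping estimate is in hand, the rest is a routine invocation of the $\lambda$-lemma.
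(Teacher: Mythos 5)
Your construction goes in the wrong direction and the key estimate you rely on is false. By setting $h_{\lambda}(z_0)=0$ and $h_{\lambda}(z_{n+1}):=g_{\lambda}(h_{\lambda}(z_n))$ you force $h_{\lambda}(z_n)=g_{\lambda}^{n}(0)=\xi_n(\lambda)$, i.e.\ your candidate motion of the orbit point $z_n$ is the perturbed singular orbit itself. The claimed uniform bound $|h_{\lambda}(z_n)-z_n|\leq\epsilon$ for all $n$ then cannot hold: dividing your recursion $\partial_{\lambda}\xi_{n+1}=Dg_{\lambda}(\xi_n)\,\partial_{\lambda}\xi_n+\partial_{\lambda}g_{\lambda}(\xi_n)$ by $(g_{\lambda}^{n+1})'(0)$ and summing gives exactly \eqref{phase-para}, and the convergence of the resulting series to the \emph{nonzero} constant $L_{\lambda}/\lambda$ (Lemma \ref{Levin}) says that $\partial_{\lambda}\xi_n$ is \emph{comparable} to the phase derivative $(g_{\lambda}^{n})'(0)$, hence grows exponentially in $n$. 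You have read the conclusion backwards: the geometric series converging does not mean the inhomogeneous term is "absorbed" into a bounded quantity; it means the parameter derivative tracks the exponentially growing phase derivative. This is precisely the transversality/large-scale mechanism (\eqref{affine}, Lemma \ref{scale}) that powers the rest of the paper: $\xi_n(D(\lambda_0,r))$ reaches a definite diameter $S$ at some finite time no matter how small $r$ is. So your pushed-forward points escape $\mathcal{N}$, the family is not equicontinuous, the map does not extend to the closure $\p(g_{\lambda_0})$, and injectivity can fail (nearby orbit points $z_n,z_m$ are repelled to far-apart, uncontrolled images).

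The correct construction, and the one the paper uses, is a pullback (shadowing) argument: for $z\in\p(g_{\lambda_0})$ one uses Lemma \ref{exue} to show that $D(g_{\lambda_0}(w),r_1)\subset g_{\lambda}(D(w,r_1))$ for $w\in\mathcal{N}$ and $\lambda\in D(\lambda_0,r)$, so the sets $g_{\lambda}^{-n}(D(g_{\lambda_0}^{n}(z),r_1))$ (with the appropriate inverse branches) are nested and shrink exponentially; $h_{\lambda}(z)$ is defined as their unique common point, i.e.\ the unique point whose $g_{\lambda}$-orbit shadows the $g_{\lambda_0}$-orbit of $z$. In particular $h_{\lambda}(0)=\mu_0(\lambda)\neq 0$ in general, and the later sections are devoted to comparing $\mu_n(\lambda)=g_{\lambda}^{n}(h_{\lambda}(0))$ with $\xi_n(\lambda)=g_{\lambda}^{n}(0)$, which separate exponentially by Lemma \ref{exdis} --- your construction conflates these two objects. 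A secondary point: injectivity of $h_{\lambda}$ for every $\lambda$ is a \emph{hypothesis} of the Ma\~n\'e--Sad--Sullivan $\lambda$-lemma, not a conclusion, so even with a bounded forward construction you would still owe a separate injectivity argument; in the pullback construction it follows from uniqueness of the shadowing orbit, and holomorphy in $\lambda$ follows from the uniform convergence of $g_{\lambda}^{-n}(g_{\lambda_0}^{n}(z))$.
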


\begin{proof}
Let $\mathcal{N}_0$ be the neighborhood of $\p(f_{\lambda_0})$ as above. Then by the above Lemma \ref{exue}, we see that $f_{\lambda}$ is uniformly expanding in $\mathcal{N}$, where $\lambda\in D(\lambda_0, r)$. By taking $r_1>0$ sufficiently small such that $D(z,r_1)\subset\mathcal{N}$ for all $z\in\p(f_{\lambda_0})$ and using the expansion of $f_{\lambda_0}$ in $\mathcal{N}$, we can choose a sufficiently small $r$ such that the following holds: For all $\lambda\in D(\lambda_0, r)$ and all $z\in\mathcal{N}$ we have $D(f_{\lambda_0}(z), r_1)\subset f_{\lambda}(D(z,r_1))$. This means that the pullback $f_{\lambda}^{-n}(D(z,r_1))$ of $D(z,r_1)$ under $f_{\lambda}^{n}$ shrinks exponentially, where the inverse branch of $f_{\lambda}$ is taken suitably. Therefore, we can find for each $z\in\p(f_{\lambda_0})$ a unique point $z_{\lambda}$ such that $f_{\lambda}^{n}(z_{\lambda})\in D(f_{\lambda_0}^{n}(z), r_1)$ for all $n$. Put $h_{\lambda}(z):=z_{\lambda}$. It follows from the construction of $h_{\lambda}$ that
$$h_{\lambda}\circ f_{\lambda_0}=f_{\lambda}\circ h_{\lambda}$$
for $z\in \p(f_{\lambda_0})$. Moreover, $h_{\lambda}$ is injective in $z$. That $h$ is holomorphic in $\lambda$ can be seen by looking at $z_{\lambda}^{n}:=f_{\lambda}^{-n}(f_{\lambda_0}^{n}(z))$. By the above argument $z_{\lambda}^{n}$ converges to $z_{\lambda}$ uniformly as $n\to\infty$. So $h$ is indeed holomorphic in $\lambda$. To sum up, $h(\lambda, z)$ thus defined is a holomorphic motion.

The last statement of the lemma follows from the above construction. Observe first that 
\begin{equation}\label{onestep}
|w_{\lambda}^{1}-w|=|\log(\lambda/\lambda_0)|
\end{equation}
 holds for any $w\in\p(f_{\lambda_0})$. Let $z$ and $z_{\lambda}$ be as above. So we have
$$
\begin{aligned}
|h_{\lambda}(z)-z|=|z_{\lambda}-z|&\leq|z_{\lambda}^{1}-z|+\sum_{n=1}^{\infty}|z_{\lambda}^{n+1}-z_{\lambda}^{n}|\\
&\leq |z_{\lambda}^{1}-z|+\sum_{n=1}^{\infty}\frac{1}{|Df_{\lambda}^{n}(z)|}\left|f_{\lambda_0}^{n}(z)-f_{\lambda}^{-1}(f_{\lambda_0}^{n+1}(z))\right|\\
&\leq \left(1+\sum_{n=1}^{\infty}\frac{1}{C\gamma^n}\right)\left|\log\frac{\lambda}{\lambda_0}\right|\\
&\leq \alpha \left|{\lambda}-{\lambda_0}\right|.
\end{aligned}
$$
In the second inequality of the above estimate, we have taken the inverse branch of $f_{\lambda}$ sending $f_{\lambda_0}^{n}(z)$ to $f_{\lambda_0}^{n+1}(z)$. In the third inequality, we used Lemma \ref{exue} and  \eqref{onestep}. Since $r>0$ is taken sufficiently small, so we have the local expansion: $|\log \lambda/\lambda_0|\sim |\lambda-\lambda_0|$. This, together with the fact that $\gamma>1$, gives the constant $\alpha$.
\end{proof}


The holomorphic motion and the expansion together imply that any two points in $\mathcal{N}$ will repel each other for all maps in $D(\lambda_0, r)$. Our analysis later tells that more information can be drawn during this process. For this purpose, we will specify a number $\delta>0$ such that the set of all points $z$ satisfying $\dist(z, h_{\lambda}(\p(f_{\lambda_0})))<10\delta$ is contained in $\mathcal{N}$ for all $\lambda\in D(\lambda_0, r)$. For instance, one can choose $\delta<2\eta$ so that Lemmas \ref{exue} and \ref{hm} hold.

Using the above lemma, we have the following result.

\begin{lemma}\label{exdis}
Let $\lambda_0$ be non-recurrent. Then there exist $r>0$ sufficiently small, constants $C$ and $\gamma>1$ such that for any $\lambda\in D(\lambda_0, r)$ we have
$$|f_{\lambda}^n(z)-f_{\lambda}^{n}(w)|\geq C \gamma^n |z-w|,$$
if $f_{\lambda}^{j}(z), f_{\lambda}^{j}(w)\in\mathcal{N}$ and $|f_{\lambda}^{j}(z)-f_{\lambda}^{j}(w)|\leq \delta$ for all $j\leq n$.
\end{lemma}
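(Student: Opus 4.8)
The plan is to combine the uniform expansion of $g_\lambda$ from Lemma~\ref{exue} with a standard telescoping argument that upgrades expansion of the derivative into expansion of genuine distances, provided the orbits stay close enough that the intermediate derivative estimates may be compared. The key point is that along a segment joining $g_\lambda^j(z)$ and $g_\lambda^j(w)$ the derivative of $g_\lambda$ does not vary too much in modulus, so that the mean value inequality (in the complex setting, integration of $Dg_\lambda$ along the segment) gives a one-step comparison, and then one iterates.

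First I would fix $\lambda_0$ non-recurrent and choose $r$, $C$, $\gamma$ as in Lemma~\ref{exue}, shrinking $r$ if necessary so that $\mathcal N\subset\mathcal N_0$ and all estimates there are available; I would also use the number $\delta>0$ already specified in the text, chosen so that the $10\delta$-neighborhood of $h_\lambda(\p(f_{\lambda_0}))$ lies in $\mathcal N$. Next, since $g_\lambda=f_\lambda^{\widetilde N}$ is a fixed iterate of an entire map and $\mathcal N$ has compact closure within a region where $g_\lambda$ is defined and expanding, there is a modulus-of-continuity / distortion bound: for some $L>0$ and all $\lambda\in D(\lambda_0,r)$, if $a,b\in\mathcal N$ with $|a-b|\le\delta$ then $|Dg_\lambda(a)|\ge (1-L|a-b|)\,|Dg_\lambda(b)|$, equivalently $|Dg_\lambda(\zeta)|\ge c\,|Dg_\lambda(b)|$ for all $\zeta$ on the segment $[a,b]$ with $c=c(\delta)\in(0,1)$; here one uses that $|Dg_\lambda|$ is bounded below on $\mathcal N$ (by the expansion with $n=1$, up to adjusting constants) and that $D^2 g_\lambda$ is bounded on a slightly larger compact neighborhood. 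Then, writing $z_j=g_\lambda^j(z)$, $w_j=g_\lambda^j(w)$, the hypothesis that $z_j,w_j\in\mathcal N$ and $|z_j-w_j|\le\delta$ for all $j\le n$ lets me integrate $Dg_\lambda$ along $[z_j,w_j]$ to get
\begin{equation}
|z_{j+1}-w_{j+1}|\ \ge\ c\,|Dg_\lambda(z_j)|\,|z_j-w_j|.
\end{equation}
Multiplying these inequalities for $j=0,\dots,n-1$ yields $|z_n-w_n|\ge c^{\,n}\,|z-w|\,\prod_{j=0}^{n-1}|Dg_\lambda(z_j)| = c^{\,n}\,|z-w|\,|Dg_\lambda^n(z)|$, and Lemma~\ref{exue} applied along the orbit of $z$ (which stays in $\mathcal N$ by hypothesis) gives $|Dg_\lambda^n(z)|\ge C\gamma^n$. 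Hence $|z_n-w_n|\ge C(c\gamma)^n|z-w|$; it remains only to arrange $c\gamma>1$.

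The one genuine obstacle is exactly that last point: the naive distortion constant $c$ could be close to $0$, so $c\gamma$ need not exceed $1$. The fix is to not apply the crude one-step comparison at every step but to run the telescoping argument over \emph{blocks} of length $p$: over $p$ consecutive steps the loss is only a single factor $c$ (coming from one distortion comparison of $|Dg_\lambda^p|$ on the segment, using that $g_\lambda^p$ has bounded distortion on $\delta$-scale balls that remain in $\mathcal N$ — which follows from the exponential shrinking of pullbacks established in the proof of Lemma~\ref{hm}), while the gain is $C\gamma^p$; choosing $p$ large enough that $c\,C\gamma^p>1$ and then writing $n=mp+s$ gives the desired $|g_\lambda^n(z)-g_\lambda^n(w)|\ge C'\gamma'^n|z-w|$ with $\gamma'>1$, after absorbing the bounded number of leftover steps into the constant $C'$. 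Alternatively, and perhaps more cleanly, one observes that the holomorphic motion $h_\lambda$ is available on $\p(g_{\lambda_0})$ and a Koebe-type argument on the inverse branches of $g_\lambda^n$ (which, by the proof of Lemma~\ref{hm}, are univalent on balls $D(\cdot,r_1)\supset D(\cdot,\delta)$ after shrinking $r_1$) gives bounded distortion of $g_\lambda^n$ on the $\delta$-ball directly, so that $|z_n-w_n|\sim |Dg_\lambda^n(z)|\,|z-w|$ with a distortion constant independent of $n$; combined with Lemma~\ref{exue} this immediately yields the claim. I would present the block argument, since it is self-contained and uses only what has been proved.
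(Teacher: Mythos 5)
The paper states Lemma~\ref{exdis} without proof, so I am judging your argument on its own terms. The strategy (telescope the mean value inequality, using a one--step distortion bound to pass from derivative expansion to distance expansion) is the standard one, and your final Koebe/inverse--branch sketch is close to what actually works. But the step you rely on in the main argument --- ``for some $L>0$ and all $a,b\in\mathcal N$ with $|a-b|\le\delta$ one has $|Dg_\lambda(a)|\ge(1-L|a-b|)\,|Dg_\lambda(b)|$'', justified by boundedness of $D^2g_\lambda$ on a ``compact'' neighborhood --- fails in this setting. The set $\mathcal N$ is a neighborhood of $\p(g_{\lambda_0})$, which is \emph{unbounded} for exactly the parameters this paper is after (escaping or oscillating singular orbits). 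Since $(\log Dg_\lambda)'=\sum_{k=0}^{\widetilde N-1}Df_\lambda^{k}$, the nonlinearity of $g_\lambda=f_\lambda^{\widetilde N}$ blows up as $\operatorname{Re}(z)\to+\infty$ once $\widetilde N\ge 2$: for $a,b$ with $|a-b|=\delta$ and $\operatorname{Re}(a)=\operatorname{Re}(b)=R$ large, $|f_\lambda(a)-f_\lambda(b)|\approx|\lambda|e^{R}\delta$ is huge, and the factor $e^{\operatorname{Re}(f_\lambda(a)-f_\lambda(b))}$ in $Dg_\lambda(a)/Dg_\lambda(b)$ is uncontrolled. So no uniform $L$ (nor a uniform $c(\delta)$) exists on $\mathcal N$. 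This is precisely why the paper's own distortion lemma (Lemma~\ref{maindistortion}) hypothesizes closeness of \emph{all intermediate} $f_\lambda$--iterates $\xi_{j,k},\mu_{j,k}$, $k\le\widetilde N$, and then exploits $f'_\lambda=f_\lambda$ together with non-recurrence ($|\xi_{j,k}|\ge\Delta_0/2$) to turn derivative ratios into ratios of orbit points; a correct proof of Lemma~\ref{exdis} has to run that computation (or the Koebe route), not a second--derivative bound.

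Two further remarks on the block device. First, it attacks the wrong problem: if the Lipschitz distortion bound you posit were true, then $c(\delta)=1-L\delta\to1$ as $\delta\to0$, so $c\gamma>1$ already for $\delta$ small and the naive telescoping closes with no blocks at all; the worry that ``$c$ could be close to $0$'' only arises because the bound is not of Lipschitz type, i.e.\ because of the real obstruction above. Second, the block step itself is not justified as written: bounded distortion of $g_\lambda^{p}$ on a $\delta$--ball about $z_j$ requires controlling the \emph{forward} images of that whole ball for $p$ steps, and these grow beyond scale $\delta$ under the expansion; the exponential shrinking from the proof of Lemma~\ref{hm} controls \emph{pullbacks} of $r_1$--balls, not push-forwards, and the hypothesis of the lemma only constrains the two orbits $\{z_j\},\{w_j\}$. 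The viable repair is your alternative: pull back $D(z_n,r_1)$ along the orbit of $z$ (univalence holds because the intermediate pullback domains stay away from the omitted value $0$, as $f_{\lambda_0}^m(0)\ne0$ for $m\ge1$ and non-recurrence keeps the rest of $\p(f_{\lambda_0})$ at distance $\ge\Delta_0$ from $0$), apply Koebe to the inverse branch, and check --- this needs the intermediate--iterate closeness again --- that this branch indeed carries $w_n$ to $w_0$.
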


\section{Phase-parameter relation and distortions}

In this section we prove some distortion results by using the expansion property in the previous section. This, together with a transversality result of Urba\'nski and Zdunik \cite{urbanski4}, gives us sufficiently good control over the distortion in the parameter space.

\subsection{Distortions}
Define 
\begin{equation}\label{xifun}
\xi_{n}(\lambda):=f_{\lambda}^{n}(0).
\end{equation}
Now we assume that $\lambda_0$ is a $\Delta_0$-non-recurrent parameter for some $\Delta_0>0$. Let $D(\lambda_0, r)$ be a disk of radius $r$ around $\lambda_0$ for some sufficiently small $r>0$. For simplicity we put $\Lambda_0:=\p(f_{\lambda_0})$. We also define 
$$\mu_n(\lambda)=h_{\lambda}(\xi_n(\lambda_0)),$$
where $h_{\lambda}$ is the holomorphic motion in Lemma \ref{hm}. 

\medskip
We shall use the following simple lemma.
\begin{lemma}\label{simplemma}
Let $a_k\in\c$ for $1\leq k\leq n$. Then
$$\left|\prod_{k=1}^{n} (1+a_k) - 1\right|\leq \exp\left(\sum_{k=1}^{n}|a_k| \right) - 1.$$
\end{lemma}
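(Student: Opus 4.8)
The statement to prove is Lemma~\ref{simplemma}: for complex numbers $a_1,\dots,a_n$,
$$\left|\prod_{k=1}^{n}(1+a_k)-1\right|\leq \exp\left(\sum_{k=1}^{n}|a_k|\right)-1.$$

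This is a standard elementary inequality, so the plan is a short induction on $n$. For $n=1$ the claim reads $|a_1|\leq e^{|a_1|}-1$, which is immediate from the Taylor expansion of the exponential (all terms nonnegative). For the inductive step, write $\prod_{k=1}^{n}(1+a_k)-1 = (1+a_n)\bigl(\prod_{k=1}^{n-1}(1+a_k)-1\bigr) + a_n$. Applying the triangle inequality and the inductive hypothesis gives
$$\left|\prod_{k=1}^{n}(1+a_k)-1\right|\leq (1+|a_n|)\left(\exp\Bigl(\sum_{k=1}^{n-1}|a_k|\Bigr)-1\right)+|a_n|.$$
It then remains to check that the right-hand side is bounded by $\exp\bigl(\sum_{k=1}^{n}|a_k|\bigr)-1$. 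Writing $S=\sum_{k=1}^{n-1}|a_k|$ and $t=|a_n|$, this is the purely real inequality $(1+t)(e^{S}-1)+t\leq e^{S+t}-1$, i.e. $(1+t)e^{S}\leq e^{S+t}$, i.e. $1+t\leq e^{t}$ — again the elementary bound already used in the base case. This closes the induction.

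Alternatively, and perhaps even more cleanly, one can avoid induction entirely: expand $\prod_{k=1}^{n}(1+a_k)-1$ as the sum over all nonempty subsets $\emptyset\neq I\subseteq\{1,\dots,n\}$ of $\prod_{k\in I}a_k$, apply the triangle inequality to get $\sum_{\emptyset\neq I}\prod_{k\in I}|a_k| = \prod_{k=1}^{n}(1+|a_k|)-1$, and then use $1+|a_k|\leq e^{|a_k|}$ termwise to obtain $\prod_{k=1}^{n}(1+|a_k|)-1\leq \prod_{k=1}^{n}e^{|a_k|}-1 = \exp\bigl(\sum_{k=1}^{n}|a_k|\bigr)-1$. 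Either route works; I would present the non-inductive one for brevity.

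There is no real obstacle here — the lemma is elementary and its proof is a one-line computation once the subset expansion (or the telescoping identity) is written down. The only thing to be careful about is that the intermediate quantities $\prod(1+|a_k|)-1$ are genuine upper bounds at each stage, which is automatic since all factors $1+|a_k|\geq 1$. The lemma will be applied later to products of derivative ratios of the form $1+a_k$ with $\sum|a_k|$ small, where it yields that $\prod(1+a_k)$ is close to $1$; that is the reason the exponential (rather than, say, a linear) bound on the right is the convenient form.
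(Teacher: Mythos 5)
Your proof is correct; in fact the paper states this lemma without proof at all, so there is nothing to compare against. Both of your arguments (the induction via the telescoping identity and the subset expansion combined with $1+|a_k|\leq e^{|a_k|}$) are standard and complete, and either one would serve as the omitted proof.
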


\begin{lemma}\label{maindistortion}
Let $\lambda_0$ be non-recurrent. For any $\varepsilon>0$ there exist $\delta>0$ and $r>0$ sufficiently small such that for any $\lambda\in D(\lambda_0,r)$, if $\xi_{j}(\lambda),\,\mu_{j}(\lambda)\in\mathcal{N}$ and $|\xi_{j}(\lambda)-\mu_{j}(\lambda)|\leq \delta$ for all $0\leq j\leq n$, then
$$
\left|\frac{(f_{\lambda}^{n})'(\mu_0(\lambda))}{(f_{\lambda}^{n})'(\xi_0(\lambda))}-1\right|<\varepsilon.
$$
\end{lemma}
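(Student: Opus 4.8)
The plan is to compare the derivatives multiplicatively, writing each as a telescoping product over the $\widetilde N$-blocks making up $g_\lambda^n$. Concretely, for each $0\le j\le n-1$ we have $(g_\lambda)'(\mu_j(\lambda))/(g_\lambda)'(\xi_j(\lambda)) = \prod_{k=0}^{\widetilde N-1} \bigl(f_\lambda'(\mu_{j,k}(\lambda))/f_\lambda'(\xi_{j,k}(\lambda))\bigr)$, and since $f_\lambda(z)=\lambda e^z$ we have $f_\lambda'(z)=\lambda e^z=f_\lambda(z)$, so the ratio $f_\lambda'(\mu_{j,k})/f_\lambda'(\xi_{j,k}) = \exp(\mu_{j,k}-\xi_{j,k})$. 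Hence
\[
\frac{(g_\lambda^n)'(\mu_0(\lambda))}{(g_\lambda^n)'(\xi_0(\lambda))} = \prod_{j=0}^{n-1}\prod_{k=0}^{\widetilde N-1}\exp\bigl(\mu_{j,k}(\lambda)-\xi_{j,k}(\lambda)\bigr) = \exp\!\Bigl(\sum_{j=0}^{n-1}\sum_{k=0}^{\widetilde N-1}\bigl(\mu_{j,k}(\lambda)-\xi_{j,k}(\lambda)\bigr)\Bigr).
\]
So by Lemma~\ref{simplemma} (or just the elementary bound $|e^w-1|\le e^{|w|}-1$), it suffices to show $\sum_{j,k}|\mu_{j,k}(\lambda)-\xi_{j,k}(\lambda)|$ is small, say at most $\log(1+\varepsilon)$.

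To control that sum, first I would reduce the inner sum over $k$ to the endpoint term $j$. The hypothesis gives $|\xi_{j,k}-\mu_{j,k}|\le\delta$ with all points in $\mathcal N$; applying Lemma~\ref{exdis} within a single block, $|\xi_{j,k}-\mu_{j,k}|\le C'|\xi_{j+1}-\mu_{j+1}| \le C'|\xi_{j}-\mu_j|\cdot(\text{something})$, but more usefully one bounds $|\xi_{j,k}-\mu_{j,k}|$ by a constant multiple of $|\xi_{j+1}-\mu_{j+1}|=|\xi_{j+1,0}-\mu_{j+1,0}|$ via the (local, uniformly bounded) expansion of $f_\lambda^{\widetilde N-k}$ on $\mathcal N$. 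Thus $\sum_{k=0}^{\widetilde N-1}|\xi_{j,k}-\mu_{j,k}|\le C''\,|\xi_{j}-\mu_j|$ for a constant $C''$ depending only on $\lambda_0$ (and $r$). It then remains to bound $\sum_{j=0}^{n-1}|\xi_j(\lambda)-\mu_j(\lambda)|=\sum_{j}|g_\lambda^j(0)-h_\lambda(\xi_j(\lambda_0))|$. Here I use Lemma~\ref{exdis} backwards: the points $g_\lambda^j(z)$ and $g_\lambda^j(w)$ with $z=0$, $w$ the $h_\lambda$-preimage, stay within $\delta$ and in $\mathcal N$ by hypothesis for all $j\le n$, so $|\xi_j-\mu_j|\le C^{-1}\gamma^{-(n-j)}|\xi_n-\mu_n|\le C^{-1}\gamma^{-(n-j)}\delta$, and summing the geometric series gives $\sum_{j=0}^{n-1}|\xi_j-\mu_j|\le \frac{\delta}{C(\gamma-1)}$, a bound independent of $n$.

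Combining, $\sum_{j,k}|\mu_{j,k}-\xi_{j,k}|\le \frac{C''\delta}{C(\gamma-1)}$, which can be made $<\log(1+\varepsilon)$ by choosing $\delta$ small (and then $r$ small enough that Lemmas~\ref{exue}, \ref{hm}, \ref{exdis} apply on $D(\lambda_0,r)$ with the stated constants). Feeding this into $\bigl|(g_\lambda^n)'(\mu_0)/(g_\lambda^n)'(\xi_0)-1\bigr|\le e^{\sum|...|}-1 < \varepsilon$ finishes the proof.

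The main obstacle I anticipate is the bookkeeping in the second paragraph: making sure that when one applies Lemma~\ref{exdis} "in reverse" the hypotheses genuinely hold for \emph{every} $j\le n$ (that both orbits stay in $\mathcal N$ and within distance $\delta$), which is exactly what is assumed, but one must be careful that the constant $C''$ absorbing the intra-block distortion is uniform over $j$ and over $\lambda\in D(\lambda_0,r)$ — this needs the uniform (in $\lambda$) local boundedness of $f_\lambda'$ and its derivative on the compact-ish neighbourhood $\mathcal N$ of $\p(g_{\lambda_0})$, together with the fact that $\mathcal N$ meets $\p(f_{\lambda})$ only near the bounded part of the orbit. The summability Lemma~\ref{sum1} is not literally needed for this statement, since the telescoping exploits the special form $f_\lambda'=f_\lambda$, but it is reassuring that the same geometric decay underlies both.
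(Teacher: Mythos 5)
Your argument is correct and follows essentially the same route as the paper: telescope the derivative ratio over the $\widetilde N$-blocks, reduce everything to $\sum_{j,k}|\mu_{j,k}(\lambda)-\xi_{j,k}(\lambda)|$, collapse each block onto its endpoint difference, and sum a geometric series obtained by running the expansion of Lemma~\ref{exdis} backwards from time $n$. Your one genuine simplification is the identity $f_\lambda'(\mu)/f_\lambda'(\xi)=e^{\mu-\xi}$, which packages the whole product as a single exponential; the paper instead writes $f_\lambda'(f_\lambda^k(\xi_j))=\xi_{j,k+1}$ and bounds $|\mu_{j,k}/\xi_{j,k}-1|\le \tfrac{2}{\Delta_0}|\mu_{j,k}-\xi_{j,k}|$, which amounts to the same thing. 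The only point to sharpen is your justification of the intra-block constant $C''$: it does not come from boundedness of $f_\lambda'$ on $\mathcal N$ (which may fail, since $\p(g_{\lambda_0})$ and hence $\mathcal N$ can be unbounded), but from the lower bound $|f_\lambda'(z)|=|f_\lambda(z)|\ge \Delta_0/2$ at the relevant points, valid because their forward images lie within $\delta$ of the post-singular set, which non-recurrence keeps outside $D(0,\Delta_0)$; this is exactly the paper's estimate \eqref{this} and yields $C''=C_1(\Delta_0)\sim(2/\Delta_0)^{\widetilde N}$, uniform in $j$ and $\lambda$.
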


\begin{proof}
By the chain rule,
\begin{equation}\label{1steq}
\begin{aligned}
\left|\frac{(f_{\lambda}^{n})'(\mu_0(\lambda))}{(f_{\lambda}^{n})'(\xi_0(\lambda))}-1\right|&=\left|\prod_{j=0}^{n-1}\frac{f'_{\lambda}(f_{\lambda}^{j}(\mu_0(\lambda)))}{f'_{\lambda}(f_{\lambda}^{j}(\xi_0(\lambda)))} - 1 \right|\\
&=\left|\prod_{j=0}^{n-1} \left(1+ \frac{f'_{\lambda}(f_{\lambda}^{j}(\mu_0(\lambda)))}{f'_{\lambda}(f_{\lambda}^{j}(\xi_0(\lambda)))}-1\right) - 1 \right|.
\end{aligned}
\end{equation}
We put
$$v_j:=\frac{f'_{\lambda}(f_{\lambda}^{j}(\mu_0(\lambda)))}{f'_{\lambda}(f_{\lambda}^{j}(\xi_0(\lambda)))}-1=\frac{f'_{\lambda}(\mu_j(\lambda))}{f'_{\lambda}(\xi_j(\lambda))} - 1.$$

By Lemma \ref{simplemma}, to show that \eqref{1steq} can be made arbitrarily small, it suffices to prove that $\sum_{j=0}^{n-1}|v_j|$ is sufficiently small. Note first that $f'_{\lambda}(\xi_j(\lambda))=f_{\lambda}(\xi_j(\lambda)))=\xi_{j+1}(\lambda)$ and $f'_{\lambda}(\mu_j(\lambda))=f_{\lambda}(\mu_j(\lambda))=\mu_{j+1}(\lambda)$. So,
$$
\sum_{j=0}^{n-1}|v_j|=\sum_{j=0}^{n-1}\left|\frac{f'_{\lambda}(\mu_j(\lambda))}{f'_{\lambda}(\xi_j(\lambda))} - 1 \right|=\sum_{j=0}^{n-1}\left|\frac{\mu_{j+1}(\lambda)}{\xi_{j+1}(\lambda)} - 1\right|
$$
By the $\Delta_0$-non-recurrence of $f_{\lambda_0}$ and Lemma \ref{exue}, we have
$$
\begin{aligned}
\sum_{j=0}^{n-1}|v_j|&\leq \frac{2}{\Delta_0}\sum_{j=0}^{n-1}\left|\mu_{j+1}(\lambda)-\xi_{j+1}(\lambda) \right|\\
&\leq \frac{2}{\Delta_0}\sum_{j=0}^{n-1}\frac{1}{C}\gamma^{j+1-n} \left|\mu_{n}(\lambda)-\xi_{n}(\lambda)\right|\\
&\leq \frac{2}{\Delta_0}C_1(\gamma)\delta.
\end{aligned}
$$
Here $C_1(\gamma)>0$ is a constant depending on $\gamma$. Now since $\delta$ can be made arbitrarily small we see that \eqref{1steq} is also small. This completes the proof of the lemma.
\end{proof}

\begin{lemma}\label{nearby}
Let $\lambda_0$ be non-recurrent. For any $\varepsilon>0$ there exist $\delta>0$ and $r>0$ sufficiently small such that for any $\lambda_1,\,\lambda_2\in D(\lambda_0,r)$, if $\xi_{j}(\lambda_i),\,\mu_{j}(\lambda_i)\in\mathcal{N}$ and $|\xi_{j}(\lambda_i)-\mu_{j}(\lambda_i)|\leq \delta$ for $i=1,2$ and all $0\leq j\leq n$, then
$$
\left|\frac{(f_{\lambda_1}^{n})'(0)}{(f_{\lambda_2}^{n})'(0)}-1\right|<\varepsilon.
$$
\end{lemma}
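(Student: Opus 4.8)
The plan is to reduce Lemma \ref{nearby} to the already-established distortion control in Lemma \ref{maindistortion} together with the holomorphic-motion estimates from Lemma \ref{hm}. The quantity $(g^n_{\lambda})'(0)$ differs from $(g^n_{\lambda})'(\mu_0(\lambda))$ only by a bounded factor coming from the finitely many initial iterates (indeed $\mu_0(\lambda)=h_\lambda(\xi_0(\lambda_0))=h_\lambda(0)$, which is $0$ when $\lambda=\lambda_0$ but in general a nearby point), so the main content is comparing the two parameters $\lambda_1,\lambda_2$ along the conjugated orbit. The natural device is the holomorphic motion $h$: for $\lambda$ near $\lambda_0$ the orbit point $\xi_n(\lambda)$ is ``shadowed'' by the moved point $\mu_n(\lambda)=h_\lambda(\xi_n(\lambda_0))$, and the hypothesis $|\xi_{j,k}(\lambda_i)-\mu_{j,k}(\lambda_i)|\le\delta$ is exactly what lets us run the telescoping estimate.

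First I would write, for each fixed $\lambda$, the decomposition
\begin{equation}
\frac{(g^n_{\lambda})'(0)}{(g^n_{\lambda})'(\mu_0(\lambda))}
=\prod_{j=0}^{n-1}\frac{g'_\lambda(g^j_\lambda(0))}{g'_\lambda(g^j_\lambda(\mu_0(\lambda)))}
=\prod_{j=0}^{n-1}\frac{g'_\lambda(\xi_j(\lambda))}{g'_\lambda(\mu_j(\lambda))},
\end{equation}
and observe this is the reciprocal of the product controlled in Lemma \ref{maindistortion}; hence under our hypotheses it lies within $\varepsilon$ of $1$, uniformly in $\lambda\in D(\lambda_0,r)$. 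So it remains to compare $(g^n_{\lambda_1})'(\mu_0(\lambda_1))$ with $(g^n_{\lambda_2})'(\mu_0(\lambda_2))$. Here I would use that $h_\lambda$ conjugates $g_{\lambda_0}$ to $g_\lambda$ on $\Lambda_0=\p(g_{\lambda_0})$, so $\mu_j(\lambda)=h_\lambda(\xi_j(\lambda_0))$ and $g^n_\lambda(\mu_0(\lambda))=h_\lambda(\xi_n(\lambda_0))$. Thus the ratio $(g^n_{\lambda_1})'(\mu_0(\lambda_1))/(g^n_{\lambda_2})'(\mu_0(\lambda_2))$ can be written as a telescoping product of factors $g'_{\lambda_1}(h_{\lambda_1}(\xi_j(\lambda_0)))/g'_{\lambda_2}(h_{\lambda_2}(\xi_j(\lambda_0)))$, and each such factor is $1+w_j$ with $|w_j|$ controlled by $|h_{\lambda_1}(\xi_j(\lambda_0))-h_{\lambda_2}(\xi_j(\lambda_0))|$ plus the $\lambda$-variation of $g'$, since $g'_\lambda(z)=\lambda^{\widetilde N}\exp(\,\cdots\,)$ has explicit dependence on both variables and the relevant points stay in the fixed neighborhood $\mathcal N$ where $g'$ is bounded away from $0$ and $\infty$.

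The key point is then to bound $\sum_{j=0}^{n-1}|w_j|$ by something small. For this I would exploit the backward contraction built into the holomorphic motion: by Lemma \ref{exdis} (and the construction in Lemma \ref{hm}) the moved orbit $\mu_j(\lambda)$ satisfies $|\mu_j(\lambda_1)-\mu_j(\lambda_2)|\le C\gamma^{j-n}|\mu_n(\lambda_1)-\mu_n(\lambda_2)|\le C\gamma^{j-n}\cdot 2\delta$, because all these points stay within $\delta$ of the common reference orbit $\xi_j(\lambda_0)$ and hence within $2\delta$ of each other, while the expansion lets us pull the estimate back from time $n$. Summing the geometric series gives $\sum_j|w_j|\le C'\delta$, and a further $C''|\lambda_1-\lambda_2|\le C''\cdot 2r$ term from the parameter-dependence of $g'$; choosing $\delta$ and $r$ small makes this $<\varepsilon$, and Lemma \ref{simplemma} converts the sum bound into the product bound. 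Combining the three estimates (the two Lemma \ref{maindistortion}-type factors for $\lambda_1$ and $\lambda_2$, and the middle telescoping factor) and adjusting constants yields the claim.

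I expect the main obstacle to be the middle step: making rigorous that the moved points $\mu_j(\lambda_1)$ and $\mu_j(\lambda_2)$ stay exponentially close as $j$ decreases from $n$. One has to be careful that the contraction estimate of Lemma \ref{exdis} applies — it requires both orbits to remain in $\mathcal N$ and within distance $\delta$ of each other at every intermediate time, which is precisely guaranteed by the hypothesis, but the bookkeeping between the $g$-iterates (steps of size $\widetilde N$) and the intermediate $f$-iterates $\xi_{j,k},\mu_{j,k}$ needs to be handled exactly as in the proof of Lemma \ref{maindistortion}. Once that shadowing-with-contraction estimate is in place, the rest is a routine repetition of the telescoping argument already carried out above.
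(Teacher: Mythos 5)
Your first reduction is exactly the paper's: since $\xi_0(\lambda)=0$, the ratio $(g_\lambda^n)'(0)/(g_\lambda^n)'(\mu_0(\lambda))$ is the reciprocal of the quantity controlled by Lemma \ref{maindistortion}, so the problem reduces to comparing $(g^n_{\lambda_1})'(\mu_0(\lambda_1))$ with $(g^n_{\lambda_2})'(\mu_0(\lambda_2))$ via the product $\prod_j g'_{\lambda_1}(\mu_j(\lambda_1))/g'_{\lambda_2}(\mu_j(\lambda_2))$. The divergence, and the gap, is in how you bound this product.

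Your key claim $|\mu_j(\lambda_1)-\mu_j(\lambda_2)|\le C\gamma^{j-n}|\mu_n(\lambda_1)-\mu_n(\lambda_2)|$ does not follow from Lemma \ref{exdis}: that lemma compares two orbits of the \emph{same} map $g_\lambda$, whereas $(\mu_j(\lambda_1))_j$ and $(\mu_j(\lambda_2))_j$ are orbits of \emph{different} maps. Writing $u_j=\mu_j(\lambda_1)-\mu_j(\lambda_2)$, one step gives
\begin{equation}
u_{j+1}=\bigl(g_{\lambda_1}(\mu_j(\lambda_1))-g_{\lambda_1}(\mu_j(\lambda_2))\bigr)+\bigl(g_{\lambda_1}(\mu_j(\lambda_2))-g_{\lambda_2}(\mu_j(\lambda_2))\bigr),
\end{equation}
so pulling the estimate backwards from time $n$ produces, besides the contracting term $\gamma^{j-n}|u_n|$, an additive error of order $|\lambda_1-\lambda_2|$ at \emph{every} step. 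These errors do not decay, so $\sum_j|u_j|$ (equivalently $\sum_j|w_j|$) is bounded below by something like $n\,|\lambda_1-\lambda_2|$ rather than by $C'\delta+C''|\lambda_1-\lambda_2|$ as you assert; you have counted the parameter-variation contribution once instead of $n$ times. To make $n\,|\lambda_1-\lambda_2|$ small you need an upper bound on $n$ in terms of $r$, and this is precisely the ingredient you never establish. The paper gets it by applying Lemma \ref{exdis} to the pair $\xi_0(\lambda)=0$ and $\mu_0(\lambda)=h_\lambda(0)$ under the single map $g_\lambda$: from $\delta\ge|\xi_n(\lambda)-\mu_n(\lambda)|\ge C\gamma^n|h_\lambda(0)|$ and the expansion $h_\lambda(0)=\alpha(\lambda-\lambda_0)^K+\cdots$ one deduces $n\lesssim\log(1/|\lambda-\lambda_0|)$, whence $n\,|\lambda_1-\lambda_2|=O(r\log(1/r))\to0$. (The paper packages the per-step parameter dependence via the Taylor expansion of $\lambda\mapsto g'_\lambda(\mu_j(\lambda))$ at $\lambda_0$, arriving at the same $n\cdot(\lambda_i-\lambda_0)^\ell$ term.) Without the $n$ versus $r$ relation your argument does not close; with it, your telescoping scheme can be repaired and becomes essentially the paper's proof.
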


\begin{proof}
The proof is similar to the proof of the above lemma. First we note that by Lemma \ref{maindistortion} it suffices to show the following holds:
\begin{equation}\label{aes}
\left|\frac{Df_{\lambda_1}^n (\mu_0(\lambda_1))}{Df_{\lambda_2}^n (\mu_0(\lambda_2))} - 1 \right|=\left|\prod_{j=0}^{n-1}\frac{f'_{\lambda_1}(\mu_j(\lambda_1))}{f'_{\lambda_2}(\mu_j(\lambda_2))} -1 \right|=\left|\prod_{j=0}^{n-1}\frac{\mu_{j+1}(\lambda_1)}{\mu_{j+1}(\lambda_2)} -1 \right|<\varepsilon.
\end{equation}
With
$$w_j=\frac{\mu_{j+1}(\lambda_1)}{\mu_{j+1}(\lambda_2)} -1,$$
it is enough to show, by Lemma \ref{simplemma}, that $\sum_{j=0}^{n-1}|w_j|$ is sufficiently small. By the $\Delta_0$-non-recurrence of $f_{\lambda_0}$ and Lemma \ref{hm}, we have
$$
\begin{aligned}
|w_j|&\leq \frac{2}{\Delta_0}\left|\mu_{j+1}(\lambda_1)-\mu_{j+1}(\lambda_2)\right|\\
&=\frac{2}{\Delta_0}\left|h_{\lambda_1}\left(\xi_{j+1}(\lambda_0)\right)-h_{\lambda_2}(\xi_{j+1}\left(\lambda_0)\right)\right|\\
&\leq \frac{2}{\Delta_0}\left(\left|h_{\lambda_1}\left(\xi_{j+1}(\lambda_0)\right)-h_{\lambda_0}(\xi_{j+1}(\lambda_0)\right|+\left|h_{\lambda_2}(\xi_{j+1}\left(\lambda_0)\right)-h_{\lambda_0}(\xi_{j+1}(\lambda_0)\right|\right)\\
&\leq\frac{2\alpha}{\Delta_0}\left(|\lambda_1-\lambda_0|+|\lambda_2-\lambda_0|\right).
\end{aligned}
$$
So we have
\begin{equation}\label{this2}
\sum_{j=0}^{n-1}|w_j|\leq \frac{2\alpha}{\Delta_0} n\left(|\lambda_1-\lambda_0|+|\lambda_2-\lambda_0|\right).
\end{equation}

Now it follows from Lemma \ref{exdis},  for $\lambda\in D(\lambda_0, r)$ we also have
\begin{equation}\label{relaest}
\begin{aligned}
\delta\geq |\xi_n(\lambda)-\mu_n(\lambda)|&=\left|f_{\lambda}^{n}(\xi_0(\lambda))-f_{\lambda}^{n}(\mu_0(\lambda))\right|\\
&\geq C\gamma^n \left|\xi_0(\lambda)-\mu_0(\lambda)\right|\\
&=C\gamma^n |h_{\lambda}(0)|,
\end{aligned}
\end{equation}
where $h_{\lambda}$ is the holomorphic motion and thus has the local expansion
\begin{equation}\label{lehm}
h_{\lambda}(0)=a_K(\lambda-\lambda_0)^{K}+\mathcal{O}\left((\lambda-\lambda_0)^{K+1}\right)
\end{equation}
for some constant $a_K$ and $K\in\mathbb{N}$. Combining \eqref{relaest} with \eqref{lehm}  we get an estimate for the time $n$: For some constant $C'>0$ depending only on $\delta$ but not on $n$,
\begin{equation}\label{estofn}
n\log\gamma\leq -C'\log |\lambda-\lambda_0|.
\end{equation}

Putting \eqref{estofn} into \eqref{this2}, we see that
\[
\sum_{j=0}^{n-1}|w_j|=\mathcal{O}\left(|\lambda_1-\lambda_0|\log|\lambda_1-\lambda_0|+|\lambda_2-\lambda_0|\log|\lambda_2-\lambda_0| \right),
\]
which can be made sufficiently close to $0$ since $r$ can be taken sufficiently small  and $|\lambda_i-\lambda_0|<r$ for $i=1, 2$. Thus \eqref{aes} holds. This finishes the proof of the lemma.
\end{proof}

\subsection{Phase-parameter relations: transversality}

In the following we need to compare the phase and parameter derivatives. We start with some calculations. Recall from \eqref{xifun} that
$$
\xi_n(\lambda)=f_{\lambda}^{n}(0).
$$
Now we write
$$\xi_n(\lambda)=f(\lambda, \xi_{n-1}(\lambda)),$$
$$\alpha_j(\lambda)=f'_{\lambda}(\xi_{j}(\lambda))$$
and
$$\rho_j(\lambda)=\partial_{\lambda}f(\lambda, \xi_j(\lambda)).$$
Then for all $n\geq 0$, we have, by induction,
$$
\begin{aligned}
\xi'_{n+1}(\lambda)&=f'_{\lambda}(\xi_n(\lambda))\cdot\xi'_{n}(\lambda)+\partial_{\lambda}f(\lambda,\xi_{n}(\lambda))\\
&=\alpha_n(\lambda)\cdot\xi'_{n}(\lambda)+\rho_n(\lambda)\\
&=\alpha_n(\lambda)\left( \alpha_{n-1}(\lambda)\cdot\xi'_{n-1}(\lambda)+\rho_{n-1}(\lambda)\right)+\rho_n(\lambda)\\
&=\prod_{j=1}^{n}\alpha_j(\lambda)\cdot\xi'_1(\lambda)+\sum_{j=1}^{n-1}\left(\prod_{k=j+1}^{n}\alpha_k(\lambda)\right)\rho_j(\lambda)+\rho_{n}(\lambda)\\
&=\prod_{j=1}^{n}\alpha_j(\lambda)\left(\xi'_1(\lambda)+\sum_{j=1}^{n}\frac{1}{\lambda^j\prod_{k=0}^{j-1}e^{\xi_k(\lambda)}} \right)\\
&=\prod_{j=1}^{n}\alpha_j(\lambda)\left( \xi'_1(\lambda)+\sum_{j=1}^{n}\prod_{k=1}^{j}\frac{1}{\xi_k(\lambda)}\right).
\end{aligned}
$$
Since $\xi'_{1}(\lambda)=1$ and
$$
\prod_{j=1}^{n}\alpha_j(\lambda)=\prod_{j=1}^{n}f'_{\lambda}(\xi_j(\lambda))=\left(f_{\lambda}^{n}\right)'(\lambda),
$$
we obtain immediately that
\begin{equation}\label{phase-para}
\begin{aligned}
\xi'_{n+1}(\lambda)&=\left(f_{\lambda}^{n}\right)'(\lambda)\left( 1+\sum_{j=1}^{n}\prod_{k=1}^{j}\frac{1}{\xi_k(\lambda)}\right)\\&=\frac{1}{\lambda}\left(f_{\lambda}^{n+1}\right)'(0)\left( 1+\sum_{j=1}^{n}\frac{1}{(f_{\lambda}^{j})'(0)}\right)\\
&=\frac{1}{\lambda}\left(f_{\lambda}^{n+1}\right)'(0)\sum_{j=0}^{n}\frac{1}{(f_{\lambda}^{j})'(0)}.
\end{aligned}
\end{equation}
We used that $(f_{\lambda}^{j})'(0)=\prod_{k=1}^{j}f_{\lambda}^{k}(0)=\prod_{k=1}^{j}\xi_k(\lambda)$ and that $f_{\lambda}^{0}$ is the identity map. The sum appearing in the last is actually a truncated term (after taking absolute value) occurring in the summability condition. Therefore, for summable $\lambda$, the limit
$$
L_{\lambda}:=\lim_{n\to\infty}\sum_{j=0}^{n}\frac{1}{(f_{\lambda}^{j})'(0)}<\infty
$$
exists. Actually, one can say more for non-recurrent parameters.

\begin{lemma}\label{Levin}
Let $\lambda$ be a non-recurrent parameter. Then $L_{\lambda}$ exists and is not equal to $0$ nor $\infty$.
\end{lemma}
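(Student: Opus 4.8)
The plan is to prove that $L_\lambda \neq 0$ and $L_\lambda \neq \infty$ separately. That $L_\lambda \neq \infty$ is already immediate: by Lemma \ref{sum1}, $\lambda$ is summable, so the series $\sum_{j \geq 0} 1/|(f_\lambda^j)'(0)|$ converges, which forces $|L_\lambda| < \infty$. So the entire content of the lemma is the assertion $L_\lambda \neq 0$.

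To show $L_\lambda \neq 0$, I would argue by contradiction and suppose $L_\lambda = 0$. The key identity is \eqref{phase-para}, which can be rewritten as
\begin{equation}
\zeta'_{n+1}(\lambda) = \frac{1}{\lambda}\left(f_\lambda^{n+1}\right)'(0) \sum_{j=0}^{n} \frac{1}{(f_\lambda^j)'(0)}.
\end{equation}
The idea is that the tail $\sum_{j=0}^{n} \frac{1}{(f_\lambda^j)'(0)} = L_\lambda - \sum_{j > n} \frac{1}{(f_\lambda^j)'(0)}$, so if $L_\lambda = 0$ this tail equals $-\sum_{j>n} \frac{1}{(f_\lambda^j)'(0)}$, whose absolute value is bounded by $\sum_{j > n} \frac{1}{|(f_\lambda^j)'(0)|}$. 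Using the uniform expansion estimate \eqref{uefn}, namely $|(f_\lambda^j)'(0)| \geq C_1 \gamma_1^j$, this tail is at most $O(\gamma_1^{-n})$. Hence under the assumption $L_\lambda = 0$ we would get
\begin{equation}
|\zeta'_{n+1}(\lambda)| \leq \frac{1}{|\lambda|}\, |(f_\lambda^{n+1})'(0)| \cdot O(\gamma_1^{-n}).
\end{equation}
On the other hand, I expect one can combine this with a lower bound coming from the transversality result of Urbański and Zdunik \cite{urbanski4} — the paper explicitly flags that this transversality input is what is needed in this subsection — to derive a contradiction. Transversality should say something like: the parameter derivative $\zeta'_{n+1}(\lambda)$ cannot decay relative to $|(f_\lambda^{n+1})'(0)|$ faster than the phase scale, i.e. $|\zeta'_{n+1}(\lambda)|$ is comparable to $|(f_\lambda^{n+1})'(0)| \cdot |L_\lambda + o(1)|$ from below, or that $\xi_n' (\lambda) = \partial_\lambda \xi_n$ stays "transverse" to the holomorphic motion direction. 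If $L_\lambda = 0$, the main term vanishes and the transversality estimate is violated.

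Concretely, I would try the following route. From \eqref{phase-para},
\begin{equation}
\frac{\lambda\,\zeta'_{n+1}(\lambda)}{(f_\lambda^{n+1})'(0)} = \sum_{j=0}^{n} \frac{1}{(f_\lambda^j)'(0)} \longrightarrow L_\lambda.
\end{equation}
So it suffices to show the left-hand side does not tend to $0$. Dividing instead by $\zeta'_{n+1}(\lambda)$ one sees $L_\lambda = 0$ is equivalent to $(f_\lambda^{n+1})'(0)/\zeta'_{n+1}(\lambda) \to \infty$, meaning the phase derivative grows infinitely faster than the parameter derivative along the orbit. I would then invoke the transversality/phase-parameter comparison (as in Urbański–Zdunik, and as will presumably be made precise in the next lemmas of this subsection, using the holomorphic motion $h_\lambda$ and the local expansion $h_\lambda(0) = \alpha(\lambda - \lambda_0)^K + O((\lambda-\lambda_0)^{K+1})$ already recorded in the proof of Lemma \ref{nearby}) to say that this ratio stays bounded, contradiction. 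An alternative, more self-contained approach: if $L_\lambda = 0$ then $\sum_{j=0}^n \frac{1}{(f_\lambda^j)'(0)}$ is, for large $n$, dominated by its own tail, and in particular has the same order as $1/(f_\lambda^{n}){}'(0)$ up to a bounded factor — one can make this quantitative and feed it back into a dynamical identity (for instance, expressing $\zeta_{n+1} - \zeta_n$ or a nearby-orbit comparison) to force the singular orbit to shadow itself too closely, contradicting $\Delta_0$-non-recurrence.

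The main obstacle I anticipate is pinning down exactly which transversality statement is being used and in what normalization, since the clean contradiction "$L_\lambda = 0$ implies the parameter derivative is negligible compared to the phase derivative, which transversality forbids" depends on having the Urbański–Zdunik estimate in a usable form; once that is in hand, the argument is short. There may also be a minor subtlety in ensuring $L_\lambda \neq 0$ genuinely uses non-recurrence (not just summability) — indeed summable parameters in general can have $L_\lambda = 0$, so the exponential/uniform expansion from Lemma \ref{lem21} and the absence of a Fatou set must enter, presumably precisely through the transversality input which is special to the non-recurrent (or at least Collet–Eckmann-type) regime.
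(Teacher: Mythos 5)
Your proposal matches the paper's approach: the paper's entire proof is that summability follows from Lemma \ref{sum1} and that $L_{\lambda}\neq 0,\infty$ is then quoted directly from Urba\'nski--Zdunik \cite[Sections 4 and 5]{urbanski4}, exactly the two ingredients you identify. Your attempted reconstruction of the transversality argument for $L_{\lambda}\neq 0$ is not needed (and, as you yourself note, risks circularity, since ``the parameter derivative is comparable to the phase derivative'' \emph{is} the statement $L_{\lambda}\neq 0$); the paper simply cites it as an external input.
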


\begin{proof}
By Lemma \ref{sum1}, $\lambda$ is summable. It follows then from the work of Urba\'nski and Zdunik \cite[Sections 4 and 5]{urbanski4} that $L_{\lambda}\neq 0,\infty$ for the non-recurrent parameter $\lambda$.
\end{proof}

By using Lemma \ref{Levin} and \eqref{phase-para}, we have a relation between phase and parameter derivatives in the following sense: For  non-recurrent $\lambda_0$, there exists $L_{\lambda_0}(\neq 0,\infty)$ such that
$$
\lim_{n\to\infty}\frac{\xi'_{n}(\lambda_0)}{\left(f_{\lambda_0}^{n}\right)'(0)}=\frac{L_{\lambda_0}}{\lambda_0}.
$$
This relation is persistent for parameters $\lambda$ which are sufficiently close to $\lambda_0$ as long as expansion on $\p(f_{\lambda_0})$ is ensured and the singular orbit of $f_{\lambda}$ stays close to $\p(f_{\lambda_0})$. More precisely, we have:

\begin{lemma}\label{trans}
Let $\lambda_0$ be non-recurrent. Then for any $q\in (0,1)$ there exist $N_1>0$ and $r>0$ sufficiently small such that the following holds. For any $\lambda\in D(\lambda_0,r)$, if $|\xi_{j}(\lambda)-\xi_{j}(\lambda_0)|\leq\delta$ for some $\delta=\delta(r)$ and for all $j\leq n$ with $n\geq N_1$, we have
$$
\left| \frac{\xi'_{n}(\lambda)}{\left(f_{\lambda}^{n}\right)'(0)}-\frac{L_{\lambda_0}}{\lambda_0}\right|\leq q\left|\frac{L_{\lambda_0}}{\lambda_0}\right|.
$$
\end{lemma}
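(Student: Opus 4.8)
The plan is to derive \eqref{trans} from the already-established convergence \eqref{escapecom} at $\lambda_0$, together with the distortion control of Lemma \ref{nearby} and Lemma \ref{maindistortion} and the expansion estimates of Lemmas \ref{exue} and \ref{exdis}. The key identity is \eqref{phase-para}, which gives
\begin{equation}\label{pp2}
\frac{\zeta'_{n}(\lambda)}{(f_{\lambda}^{n})'(0)}=\frac{1}{\lambda}\sum_{j=0}^{n-1}\frac{1}{(f_{\lambda}^{j})'(0)},
\end{equation}
so that controlling the left-hand side amounts to controlling the truncated sums $S_n(\lambda):=\sum_{j=0}^{n-1}\frac{1}{(f_{\lambda}^{j})'(0)}$ for $\lambda$ near $\lambda_0$, uniformly in $n$, and comparing $S_n(\lambda)$ with $S_\infty(\lambda_0)=L_{\lambda_0}$.

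First I would fix $q\in(0,1)$ and use \eqref{escapecom} (equivalently \eqref{pp2} at $\lambda_0$) to choose $N_1$ so large that $\bigl|S_{N_1}(\lambda_0)-L_{\lambda_0}\bigr|$ and the tail $\sum_{j\geq N_1}|(f_{\lambda_0}^{j})'(0)|^{-1}$ are both smaller than a small fraction of $|L_{\lambda_0}|$ (using summability, Lemma \ref{sum1}, and Lemma \ref{Levin} that $L_{\lambda_0}\neq 0,\infty$). Next, for $\lambda\in D(\lambda_0,r)$ satisfying the hypothesis $|\xi_{j,k}(\lambda)-\xi_{j,k}(\lambda_0)|\leq\delta$ for all $j\leq n$, $k\leq\widetilde{N}$, I would split $S_n(\lambda)-L_{\lambda_0}$ into three pieces: the head $S_{N_1}(\lambda)-S_{N_1}(\lambda_0)$, the difference $S_{N_1}(\lambda_0)-L_{\lambda_0}$ already made small, and the tail $\sum_{j=N_1}^{n-1}(f_{\lambda}^{j})'(0)^{-1}$. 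For the head, since $N_1$ is a fixed finite number and $\xi,\mu$ and hence $\zeta$ vary continuously (indeed holomorphically) in $\lambda$, shrinking $r$ makes $|S_{N_1}(\lambda)-S_{N_1}(\lambda_0)|$ as small as we wish. For the tail, I would use the uniform expansion: by Lemma \ref{maindistortion} (with $\mu_0(\lambda)$ and $\xi_0(\lambda)$ — note $\xi_0(\lambda)=0$) and Lemma \ref{nearby}, for $j\geq N_1$ we get $|(f_{\lambda}^{j})'(0)|\geq \tfrac12 |(f_{\lambda_0}^{j})'(0)|\geq \tfrac12 C_1\gamma_1^{j}$ from \eqref{uefn}, so the tail is bounded by $2\sum_{j\geq N_1}(C_1\gamma_1^{j})^{-1}$, which is again a small fraction of $|L_{\lambda_0}|$ once $N_1$ is large. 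Adding these and dividing by $|\lambda|\sim|\lambda_0|$ yields \eqref{trans}, possibly after enlarging $N_1$ and shrinking $r$ once more; the final sentence about $\xi_n$ and $g_{\lambda}$ is immediate since $\xi_n(\lambda)=\zeta_{n\widetilde{N}}(\lambda)$ and the hypotheses transfer verbatim.

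The main obstacle I anticipate is making the tail bound genuinely uniform in both $n$ and $\lambda$: one must ensure that the distortion lemmas apply for \emph{all} $j$ up to $n$, which requires that the orbit points $\xi_{j,k}(\lambda)$ (and the motioned points $\mu_{j,k}(\lambda)$) remain in $\mathcal{N}$ and within $\delta$ of each other for the full range $j\leq n$ — exactly the standing hypothesis of the lemma — and that the constant $\tfrac12$ in $|(f_{\lambda}^{j})'(0)|\geq\tfrac12|(f_{\lambda_0}^{j})'(0)|$ does not degrade as $n$ grows. This is precisely what Lemma \ref{nearby} delivers (its bound is uniform in $n$, the enemy time being controlled logarithmically via Lemma \ref{exdis}), so the argument closes; the bookkeeping is just choosing $N_1$ first, then $r$ (hence $\delta$) small enough afterwards, in that order.
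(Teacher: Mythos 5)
Your proposal is correct and follows essentially the same route as the paper: both start from the identity \eqref{phase-para}, split the sum $\frac{1}{\lambda}\sum_{j=0}^{n-1}(f_{\lambda}^{j})'(0)^{-1}$ at a fixed time $N_1$, control the tail uniformly via the expansion estimates (\eqref{uefn} together with Lemma \ref{exue}) and the head by continuity in $\lambda$ for the finitely many initial terms, then conclude by the triangle inequality. Your detour through Lemmas \ref{maindistortion} and \ref{nearby} for the tail is a slightly heavier tool than the paper's direct appeal to Lemma \ref{exue}, but it lands in the same place.
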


\begin{proof}
By \eqref{phase-para} we have
$$\frac{\xi'_{n}(\lambda)}{\left(f_{\lambda}^{n}\right)'(0)}=\frac{1}{\lambda}\sum_{j=0}^{n-1}\frac{1}{(f_{\lambda}^j)'(0)}.$$
By taking $N_1$ sufficiently large so that $N_1>\widetilde{N}$ and using \eqref{uefn} and Lemma \ref{exue} we can have
$$\frac{1}{|\lambda|}\sum_{j=N_1+1}^{\infty}\frac{1}{|(f_{\lambda}^j)'(0)|}\leq \frac{q}{2}\left|\frac{L_{\lambda_0}}{\lambda_0}\right|.$$
On the other hand, by continuity one can find some $r>0$ such that if $\lambda\in D(\lambda_0, r)$,
$$\left|\frac{1}{\lambda}\sum_{j=0}^{N_1}\frac{1}{(f_{\lambda}^j)'(0)} -\frac{L_{\lambda_0}}{\lambda_0} \right|\leq \frac{q}{2}\left|\frac{L_{\lambda_0}}{\lambda_0}\right|.$$
Combining all the above we have
$$
\begin{aligned}
\left| \frac{\xi'_{n}(\lambda)}{\left(f_{\lambda}^{n}\right)'(0)}-\frac{L_{\lambda_0}}{\lambda_0}\right|&\leq \left|\frac{1}{\lambda}\sum_{j=0}^{N_1}\frac{1}{(f_{\lambda}^j)'(0)} -\frac{L_{\lambda_0}}{\lambda_0} \right|+\frac{1}{|\lambda|}\sum_{j=N_1+1}^{\infty}\frac{1}{|(f_{\lambda}^j)'(0)|}\\
&\leq q\left|\frac{L_{\lambda_0}}{\lambda_0}\right|.
\end{aligned}
$$
\end{proof}

Lemma \ref{trans} and Lemma \ref{nearby} together imply that the function $\xi_n$ is an almost affine map.

\begin{lemma}[Strong distortion]\label{stron}
Let $\lambda_0$ be non-recurrent. For any $\varepsilon>0$ there exist $\delta>0$ and $r>0$ sufficiently small such that for any $\lambda_1,\,\lambda_2\in D(\lambda_0,r)$, if $|\xi_{j}(\lambda_i)-\mu_{j}(\lambda_i)|\leq \delta$ for $i=1,2$ and all $0\leq j\leq n$ with $n\geq N_1$, then
$$
\left|\frac{\xi_j'(\lambda_1)}{\xi_j'(\lambda_2)}-1 \right|\leq \varepsilon.
$$
\end{lemma}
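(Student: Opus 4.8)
The plan is to factor the ratio $\xi_j'(\lambda_1)/\xi_j'(\lambda_2)$ into a part controlled by the transversality estimate of Lemma~\ref{trans} and a part controlled by the phase-derivative comparison of Lemma~\ref{nearby}, each close to $1$. Fix an index $j$ with $N_1\le j\le n$; the assumptions of the lemma, imposed for all $0\le j\le n$, hold in particular for this $j$. Since $(g_\lambda^{j})'(0)\neq 0$ for every $\lambda$, I may write
\begin{equation}\label{strondecomp}
\frac{\xi_j'(\lambda_1)}{\xi_j'(\lambda_2)}=\frac{\xi_j'(\lambda_1)}{(g_{\lambda_1}^{j})'(0)}\cdot\frac{(g_{\lambda_2}^{j})'(0)}{\xi_j'(\lambda_2)}\cdot\frac{(g_{\lambda_1}^{j})'(0)}{(g_{\lambda_2}^{j})'(0)}.
\end{equation}

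I would first fix $q=q(\varepsilon)\in(0,1)$ small, to be pinned down at the end, and apply the $g$-version of Lemma~\ref{trans} to $\lambda=\lambda_1$ and to $\lambda=\lambda_2$ (valid since $j\ge N_1$):
$$\left|\frac{\xi_j'(\lambda_i)}{(g_{\lambda_i}^{j})'(0)}-\frac{L_{\lambda_0}}{\lambda_0}\right|\le q\left|\frac{L_{\lambda_0}}{\lambda_0}\right|,\qquad i=1,2.$$
By Lemma~\ref{Levin} the value $L_{\lambda_0}/\lambda_0$ is finite and nonzero, so both quotients $\xi_j'(\lambda_i)/(g_{\lambda_i}^{j})'(0)$ are nonzero and lie within $q|L_{\lambda_0}/\lambda_0|$ of it; hence the product of the first two factors in \eqref{strondecomp} differs from $1$ by at most $2q/(1-q)$, which is $<\varepsilon/2$ once $q$ is small in terms of $\varepsilon$. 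For the third factor I would invoke Lemma~\ref{nearby} with the index $j\le n$, which — after possibly shrinking $\delta$ and $r$ — gives $|(g_{\lambda_1}^{j})'(0)/(g_{\lambda_2}^{j})'(0)-1|<\varepsilon/4$. Multiplying the two estimates (e.g. via Lemma~\ref{simplemma}, or directly since everything is near $1$) yields $|\xi_j'(\lambda_1)/\xi_j'(\lambda_2)-1|\le\varepsilon$.

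The point that needs care, and the main obstacle, is that the hypothesis of the lemma is phrased via $|\xi_{j,k}(\lambda_i)-\mu_{j,k}(\lambda_i)|\le\delta$, whereas Lemma~\ref{trans} wants $\xi_{j,k}(\lambda_i)$ close to $\xi_{j,k}(\lambda_0)$ and Lemma~\ref{nearby} also wants the points $\xi_{j,k}(\lambda_i),\mu_{j,k}(\lambda_i)$ to lie in $\mathcal{N}$. Both are resolved by the continuity of the holomorphic motion of Lemma~\ref{hm}: extending $h_\lambda$ along the orbit by $\mu_{j,k}(\lambda)=f_\lambda^{k}(h_\lambda(\xi_j(\lambda_0)))$, so that $\mu_{j,k}(\lambda_0)=\xi_{j,k}(\lambda_0)$, the map $\lambda\mapsto\mu_{j,k}(\lambda)$ is continuous uniformly in $j$ and in $k\le\widetilde{N}$. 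Shrinking $r$ therefore forces $|\mu_{j,k}(\lambda_i)-\xi_{j,k}(\lambda_0)|\le\delta$ for all $j\le n$, $k\le\widetilde{N}$, $i=1,2$, and the triangle inequality upgrades the hypothesis to $|\xi_{j,k}(\lambda_i)-\xi_{j,k}(\lambda_0)|\le 2\delta$, which feeds Lemma~\ref{trans} after renaming $\delta$. Moreover each $\mu_{j,k}(\lambda_i)$ lies on the moved post-singular set $h_{\lambda_i}(\p(f_{\lambda_0}))$, hence in $\mathcal{N}$ by the choice of $\delta<2\eta$ made after Lemma~\ref{hm}, and then $\xi_{j,k}(\lambda_i)$ also lies in $\mathcal{N}$, being within $\delta$ of $\mu_{j,k}(\lambda_i)$; this supplies the membership hypothesis of Lemma~\ref{nearby}. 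What remains is only the ordering of the constants: choose $q$ from $\varepsilon$ (hence $N_1$), then $\delta$ and $r$ small enough for Lemma~\ref{nearby}, for the uniform continuity above, and so that $2\delta$ stays below the threshold required by Lemma~\ref{trans}.
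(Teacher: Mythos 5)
Your proposal is correct and follows exactly the route the paper intends (the paper omits a written proof, saying only that Lemma~\ref{trans} together with Lemma~\ref{nearby} yields the statement): you factor $\xi_j'(\lambda_1)/\xi_j'(\lambda_2)$ through the phase derivatives $(g_{\lambda_i}^{j})'(0)$, control the outer factors by the transversality estimate \eqref{trans} and the nonvanishing of $L_{\lambda_0}$ from Lemma~\ref{Levin}, and the middle factor by Lemma~\ref{nearby}. Your extra care in translating the hypothesis $|\xi_{j,k}(\lambda_i)-\mu_{j,k}(\lambda_i)|\le\delta$ into the hypotheses of Lemmas~\ref{trans} and~\ref{nearby} via the holomorphic motion is a reasonable filling-in of details the paper leaves implicit.
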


With this lemma we get that
\begin{equation}\label{affine}
\left|\xi_{n}(\lambda_1)-\xi_{n}(\lambda_2)\right|\sim |Df_{\lambda_0}^{n-1}(0)||\lambda_1 - \lambda_2|
\end{equation}
as long as $\xi_j(\lambda_1)$ and $\xi_j(\lambda_2)$ stay close in a neighborhood of the singular orbit of $f_{\lambda_0}$ for all $j\leq n$. As $|Df^{n}_{\lambda_0}(0)|$ grows (at least) exponentially we obtain immediately the following result.

\begin{lemma}[Large scale]\label{scale}
Let $\lambda_0$ be non-recurrent. There exists a number $S>0$ such that for all $r>0$ sufficiently small, there is an integer $n$ so that $D(\xi_n(\lambda_0), S/4)\subset\xi_{n}(D(\lambda_0, r))\subset\mathcal{N}$ and has diameter at least $S$.
\end{lemma}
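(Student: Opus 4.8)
My plan is to push the parameter disk $D(\lambda_0,r)$ forward under the maps $\xi_n$ and to stop the first time its image has reached a definite, $r$-independent size, keeping control throughout by means of the almost affine estimate \eqref{affine} and the at least exponential growth of $|Dg^{n}_{\lambda_0}(0)|$ supplied by \eqref{uefn} and Lemma \ref{exue}. First I would fix constants: take $\varepsilon=\tfrac12$ in Lemma \ref{stron}, let $\delta,r_1$ be the resulting quantities, and shrink $\delta$ if necessary so that $\delta<50\eta$ (admissible, as Lemma \ref{stron} holds with any smaller $\delta$). By \eqref{affine} there is $C\ge 1$, depending only on $\lambda_0$, with $\tfrac1C|Dg^{n-1}_{\lambda_0}(0)|\,|\lambda_1-\lambda_2|\le|\xi_n(\lambda_1)-\xi_n(\lambda_2)|\le C|Dg^{n-1}_{\lambda_0}(0)|\,|\lambda_1-\lambda_2|$ whenever $\lambda_1,\lambda_2\in D(\lambda_0,r)$ and the orbits $\xi_j(\lambda_1),\xi_j(\lambda_2)$ (together with their intermediate $f_\lambda$-iterates) stay in $\mathcal N$ and $\delta$-close for all $j\le n$. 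I would also record an elementary geometric remark: if a set $E\subset\c$ contains a point $p\in\p(g_{\lambda_0})$ but is not contained in $\mathcal N=D(\p(g_{\lambda_0}),50\eta)$, then, since $D(p,50\eta)\subset\mathcal N$, the set $E$ has a point at distance $\ge 50\eta$ from $p$, hence $\diam E\ge 50\eta$.

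Next, for $0<r<r_1$ I would let $m=m(r)$ be the largest integer with $\diam\xi_j(D(\lambda_0,r))\le\delta$ for all $j\le m$. For $j\le m$ the set $\xi_j(D(\lambda_0,r))$ contains $\xi_j(\lambda_0)\in\p(g_{\lambda_0})$ and has diameter $\le\delta<50\eta$, hence lies in $\mathcal N$, so the hypotheses of \eqref{affine} hold up to time $m$; choosing $\lambda_1,\lambda_2$ antipodal on $\partial D(\lambda_0,r)$ gives $\diam\xi_j(D(\lambda_0,r))\ge\tfrac2C|Dg^{j-1}_{\lambda_0}(0)|\,r$ for $1\le j\le m$, which by \eqref{uefn} grows at least exponentially in $j$. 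Hence $m(r)$ is finite; and since each $\xi_j$ is continuous in $\lambda$ with $\xi_j(\lambda_0)\in\p(g_{\lambda_0})$, for every fixed $N$ one has $\diam\xi_j(D(\lambda_0,r))<\delta$ for all $j\le N$ once $r$ is small, so $m(r)\to\infty$ as $r\to0$; in particular $m(r)\ge N_1$ for small $r$. It then remains to prove that $\diam\xi_{m(r)}(D(\lambda_0,r))\ge S$ for some $S>0$ independent of $r$, for then $n=m(r)$ does the job, $\xi_{m(r)}(D(\lambda_0,r))$ being in $\mathcal N$.

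By maximality, $\diam\xi_{m(r)+1}(D(\lambda_0,r))>\delta$, while $\diam\xi_{m(r)}(D(\lambda_0,r))\le\delta$. To compare the two I would unfold the last $g_\lambda$-step into its $\widetilde N$ constituent applications of $f_\lambda=\lambda e^z$ and follow $\xi_{m(r),k}(D(\lambda_0,r))$ for $k=0,\dots,\widetilde N$, a chain that starts with diameter $\le\delta$ and ends with diameter $>\delta$. Using the elementary two-sided estimate $\tfrac1e|f_\lambda'(w)||z-w|\le|f_\lambda(z)-f_\lambda(w)|\le e|f_\lambda'(w)||z-w|$ for $|z-w|\le 1$, valid since $f_\lambda(z)=\lambda e^z$, together with the smallness of $\partial_\lambda f_\lambda(z)=e^z$ for $\lambda$ near $\lambda_0$, each $f_\lambda$-step multiplies the running diameter by a factor that stays bounded as long as that diameter remains below a fixed threshold; stopping instead at the first intermediate iterate at which a fixed value $S\le\delta$ is reached — which, by the geometric remark (a set meeting $\p(g_{\lambda_0})$ must already have diameter $\ge 50\eta>S$ before it can exit $\mathcal N$), still occurs inside $\mathcal N$ — and then returning to an index of the form $n\widetilde N$ at the cost of at most $\widetilde N$ further steps, one lands on an $\xi_n$ with $\xi_n(D(\lambda_0,r))\subset\mathcal N$ and $\diam\xi_n(D(\lambda_0,r))\in[S,C_2S]$, where $C_2=C_2(\lambda_0,\widetilde N)$. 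This gives the lemma with, say, $S=\delta/C_2$.

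The step I expect to be the genuine obstacle is exactly the last one: producing a lower bound $S$ that is uniform in $r$. If $\p(f_{\lambda_0})$ is bounded it is immediate, since $g_{\lambda_0}'$ is then bounded on $\mathcal N$ and the diameter can grow by at most a fixed factor per iterate. But a non-recurrent parameter may be post-singularly unbounded, and then the pointwise multiplier $|g'_{\lambda_0}(\xi_{m(r)}(\lambda_0))|=\prod_{k=1}^{\widetilde N}|\xi_{m(r),k}(\lambda_0)|$ is not bounded along the orbit, so a priori the diameter could leap from a tiny value past $\delta$ in a single iterate. Preventing this — i.e.\ ensuring that the diameter passes through the fixed scale $S$ before it overshoots $\mathcal N$ — is precisely what the careful bookkeeping along the intermediate $f_\lambda$-iterates above, anchored by the distortion inequalities for $\lambda e^z$ and by the fact that leaving $\mathcal N$ already costs diameter $\ge 50\eta$, is designed to do.
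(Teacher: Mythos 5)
Your core mechanism --- the diameter of $\xi_n(D(\lambda_0,r))$ grows like $|Dg_{\lambda_0}^{n}(0)|\,r$ by \eqref{affine} as long as it stays small, while $|Dg_{\lambda_0}^{n}(0)|$ grows exponentially by \eqref{uefn}, so the diameter must eventually reach a definite scale --- is exactly the paper's proof, which simply sets $S=\delta/2$ and derives a contradiction from the assumption that $\diam\xi_n(D(\lambda_0,r))<S$ for all $n$. Up to and including the finiteness of $m(r)$ and the validity of \eqref{affine} for $j\le m(r)$, your write-up is a correct (indeed more careful) rendering of that argument.

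The gap is in the step you yourself flag as the obstacle: controlling the jump of the diameter at the exit time when $\p(f_{\lambda_0})$ is unbounded. Two assertions there fail. First, the claim that ``each $f_\lambda$-step multiplies the running diameter by a factor that stays bounded as long as that diameter remains below a fixed threshold'' is false: by your own two-sided estimate the factor is comparable to $|f_\lambda'(w)|=|\lambda|e^{\re w}=|f_\lambda(w)|$, the modulus of the \emph{image} post-singular point, which is unbounded along an unbounded orbit and is independent of how small the set currently is; a set of diameter $S/2$ sitting far to the right is carried in a single $f_\lambda$-step to a set whose diameter can exceed $50\eta$ by any prescribed factor. Second, the ``geometric remark'' is invoked as if the diameter varied continuously along the orbit: it only says that a set containing a point of $\p(g_{\lambda_0})$ and \emph{not} contained in $\mathcal N$ has diameter at least $50\eta$; it does not force the diameter to pass through the window $[S,50\eta)$ while the set is still inside $\mathcal N$. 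Since the iteration is discrete, the first intermediate iterate at which the diameter reaches $S$ can be the very iterate at which it jumps past $50\eta$, so neither the containment $\xi_n(D(\lambda_0,r))\subset\mathcal N$ nor the upper bound $\diam\le C_2S$ (which you need in order to return to an index of the form $n\widetilde N$) is justified. Equivalently, in your formulation with $m(r)$: you would need $\diam\xi_{m(r)}(D(\lambda_0,r))\ge\delta/K$ with $K$ a uniform bound on the one-step expansion along the orbit, and no such $K$ exists for post-singularly unbounded non-recurrent parameters. To be fair, the paper's own three-line proof is silent on exactly this point; but closing it requires a genuine additional input exploiting where the orbit sits in the plane, not the intermediate-value reasoning you propose.
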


\begin{proof}
Put $S=\delta/2$ with $\delta$ in the above lemma. If $\xi_{n}(D(\lambda_0, r))$ never reached the large scale, i.e.,
$$\diam \xi_{n}(D(\lambda_0, r)) < S$$
for all $n\geq 0$, then we have \eqref{affine} satisfied for all $n$. So we see that, for some constant $C>0$,
$$S>\diam \xi_{n}(D(\lambda_0, r)) \geq C \gamma^{n-1} r.$$
But this is impossible since $\gamma>1$ and $n$ can be taken sufficiently large. Hence, $\xi_{n}(D(\lambda_0, r))\supset D(\xi_n(\lambda_0), S/4)$ follows directly from Lemma \ref{stron}.
\end{proof}

\section{Non-recurrent dynamics are rare}

It follows from Lemma \ref{scale} that a small disk around a starting non-recurrent parameter will finally grow to a definite size under the function $\xi_{n}$ with bounded distortion. As long as this happens, any compact set will finally be covered within a few more iterates. For our purpose, we shall show that a definite portion of parameters in the disk $D(\lambda_0, r)$ are not $\Delta$-non-recurrent (i.e., those $\lambda$ for which $f^{n}_{\lambda}(0)$ belongs to $D(0,\Delta)$). 

Let $\nr$ and $\nr_{\Delta}$ be the set of non-recurrent and $\Delta$-non-recurrent parameters in the exponential family, respectively; see Definition \ref{def21}. We prove the following result, which implies directly Theorem \ref{maintheorem1}, since $\nr=\cup_{\Delta>0}\nr_{\Delta}.$

\begin{proposition}\label{props}
Let $\lambda_0$ be non-recurrent. For any $\Delta>0$ and for all sufficiently small $r>0$, we have
$$\dens\left(\nr_{\Delta}, D(\lambda_0, r)\right)=\frac{\meas\left(D(\lambda_0, r)\cap\nr_{\Delta} \right)}{\meas D(\lambda_0, r)}<1.$$
\end{proposition}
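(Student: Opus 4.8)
The plan is to argue by contradiction: suppose that for some $\Delta>0$ and arbitrarily small $r>0$ we have $\dens(\nr_\Delta, D(\lambda_0,r))=1$, i.e. up to a null set \emph{every} parameter in $D(\lambda_0,r)$ is $\Delta$-non-recurrent. The strategy is then to exploit the ``almost affine'' behaviour of $\xi_n$ (Lemma~\ref{stron} and the comparison \eqref{affine}) together with Lemma~\ref{scale}, which produces a time $n_0$ at which $\xi_{n_0}(D(\lambda_0,r))$ has definite diameter $\geq S$ while still sitting inside $\mathcal{N}$. The point is that $\xi_{n_0}$ restricted to $D(\lambda_0,r)$ is, by \eqref{affine}, a univalent map with bounded distortion onto its image, so it distorts Lebesgue density only by a bounded factor; hence the hypothetical full-density statement in the parameter disk transports to a near-full-density statement for the set $h_\lambda(\text{something})$-translate of $\p(f_{\lambda_0})$ inside a ball of definite size in the dynamical plane.

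The next step is to run the dynamics a few more steps past time $n_0$. Since $\xi_{n_0}(D(\lambda_0,r))$ is a set of diameter $\geq S$ lying in $\mathcal N$, a bounded number of further applications of $g_{\lambda}$ (equivalently $f_\lambda$) will spread it so that its image covers a fixed disk $D(0,\Delta')$ around the singular value $0$ for every $\lambda$ in the disk — here one uses that $g_{\lambda_0}$ is a transcendental map with $0$ in its Julia set, so forward images of definite-size sets become large and in particular swallow $D(0,\Delta)$; the bounded-distortion control from Lemmas~\ref{maindistortion}--\ref{nearby} (or directly \eqref{affine}) guarantees this persists for nearby $\lambda$ with comparable distortion. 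Consequently, a definite proportion (bounded below independently of $r$) of parameters $\lambda\in D(\lambda_0,r)$ satisfy $\xi_{m}(\lambda)\in D(0,\Delta)\setminus\{0\}$ for some $m$ slightly larger than $n_0$, i.e. $f^{k}_\lambda(0)\in D(0,\Delta)$ for some $k$ with $f^k_\lambda(0)\neq 0$. Such $\lambda$ are by definition \emph{not} $\Delta$-non-recurrent, so $\dens(\nr_\Delta,D(\lambda_0,r))\le 1-c<1$ for a constant $c>0$, contradicting the assumption. This contradiction proves the proposition, and since $\nr=\bigcup_{\Delta>0}\nr_\Delta$ is a countable union of sets each of which has no Lebesgue density point (every non-recurrent $\lambda_0$ fails to be a density point of $\nr_\Delta$), Theorem~\ref{maintheorem1} follows by the Lebesgue density theorem.

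I expect the main obstacle to be the bookkeeping in the ``few more iterates'' step: one must make precise the claim that once $\xi_{n_0}(D(\lambda_0,r))$ has diameter $\geq S$ inside $\mathcal N$, a \emph{uniformly bounded} number of additional iterates (independent of $r$) forces the image to contain a definite disk meeting $D(0,\Delta)$ but avoiding the problematic point where distortion estimates break, and that the portion of the disk mapping into $D(0,\Delta)$ has density bounded below uniformly. This requires combining the univalence/bounded-distortion of $\xi_{n_0}$ on $D(\lambda_0,r)$ with an argument in the dynamical plane showing that $g_\lambda$ expands a definite-size subset of $\mathcal N$ across $D(0,\Delta)$; one has to be slightly careful because $0$ lies in the Julia set and the relevant inverse branches must be chosen consistently in $\lambda$, but the holomorphic motion $h$ of Lemma~\ref{hm} and the large-scale Lemma~\ref{scale} are exactly the tools designed to handle this. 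The distortion lemmas of Section~3 then ensure that ``definite proportion in the dynamical plane'' pulls back to ``definite proportion in the parameter plane,'' closing the argument.
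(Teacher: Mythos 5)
Your overall route is the same as the paper's: use Lemma~\ref{scale} to reach a definite scale $S$ at some time $n$, use the strong distortion of Lemma~\ref{stron} so that $\xi_n(D(\lambda_0,r))$ contains a round disk $V$ of radius comparable to $S$ and so that measure proportions transfer between the parameter disk and $V$, blow $V$ up onto a fixed compact set near $0$ in a bounded number $N$ of further iterates (Baker's lemma, Lemma~\ref{blowup}), and pull the resulting definite proportion back to $D(\lambda_0,r)$. The contradiction framing is superfluous --- your own argument directly yields $\dens(\nr_\Delta,D(\lambda_0,r))\le 1-c$ --- but that is harmless.

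One step, as literally stated, would fail: you assert that a bounded number of further iterates makes the image ``cover a fixed disk $D(0,\Delta')$ around the singular value $0$.'' This is impossible, since $0$ is an omitted value of $f_\lambda(z)=\lambda e^z$ (it is precisely the exceptional point excluded in Lemma~\ref{blowup}), so $f_\lambda^N(V)$ never contains $0$ and hence never contains any disk centered at $0$. The paper circumvents this by taking the target compact set to be the annulus $\overline{A(\Delta/4,\Delta)}$, which avoids the exceptional point, and then shrinking to $A'=A(\Delta/2,3\Delta/4)$ so that the covering persists for all $\lambda$ near $\lambda_0$; landing in $A'$ still violates $\Delta$-non-recurrence, which is all that is needed. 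Your later phrase ``$\xi_m(\lambda)\in D(0,\Delta)\setminus\{0\}$'' shows you sense the issue, and your caveat about ``the problematic point'' gestures at it, but the repair (replace the disk by an annulus, or any compact set in $D(0,\Delta)\setminus\{0\}$, before invoking the blow-up lemma) needs to be made explicit for the argument to go through. With that single correction your proof coincides with the paper's.
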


We also need the following general result of Baker \cite[Lemma 2.2]{baker15}.

\begin{lemma}\label{blowup}
Let $f$ be a transcendental entire function. Let $U$ be a neighbourhood of $z\in\j(f)$. Then for any compact set $K$ not containing an exceptional point of $f$ there exists $n(K)\in\mathbb{N}$ such that $f^{n}(U)\supset K$ for all $n\geq n(K)$.
\end{lemma}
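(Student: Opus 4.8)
My plan follows the classical two-stage route. \emph{Stage 1} is the qualitative ``blowing-up'' inclusion $\bigcup_{n\ge 1}f^{n}(U)\supseteq\c\setminus E(f)$, where $E(f)$ denotes the set of exceptional points of $f$ (for a transcendental entire function this is at most one point). I would argue by contradiction via Montel's theorem: if $A:=\c\setminus\bigcup_{n\ge 1}f^{n}(U)$ contained two distinct points, then $\{f^{n}|_{U}\}_{n\ge 1}$ would omit two values on $U$ and hence be normal there, contradicting $U\cap\j(f)\ne\emptyset$. So $A$ is empty or a single point $\{a\}$; in the latter case a preimage of $a$ lying in some $f^{n}(U)$ would put $a\in f^{n+1}(U)$, so $f^{-1}(a)\subseteq A=\{a\}$, the full backward orbit of $a$ is $\{a\}$, and $a\in E(f)$. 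Thus $A\subseteq E(f)$, so $K\subseteq\bigcup_{n\ge1}f^{n}(U)$, and by compactness $K$ is covered by finitely many of the open sets $f^{n}(U)$.

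\emph{Stage 2} upgrades this to ``$f^{n}(U)\supseteq K$ for \emph{all} large $n$'', which a finite subcover does not give, since the sets $f^{n}(U)$ are not nested. Here I would bring in genuine local expansion. By Baker's theorem $f$ has infinitely many repelling periodic points; since $E(f)$ has at most one point, fix a repelling periodic point $\zeta\notin E(f)$ of period $p$, and put $F:=f^{p}$ --- a transcendental entire map with $\j(F)=\j(f)$ and $E(F)=E(f)$, for which $\zeta$ is a repelling fixed point. For small $\rho>0$ the disk $D:=D(\zeta,\rho)$ satisfies $F(D)\supseteq\overline D$. This yields two facts. First, $F^{k}(D)=F^{k-1}(F(D))\supseteq F^{k-1}(\overline D)\supseteq F^{k-1}(D)$, so $F^{k}(D)$ increases with $k$, and hence so does $F^{k}(f^{i}(D))=f^{i}(F^{k}(D))$ for each $0\le i<p$ (using $F^{k}\circ f^{i}=f^{i}\circ F^{k}$). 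Second, applying the Stage 1 argument to the map $F$ and the open set $f^{i}(D)$ --- which meets $\j(F)=\j(f)$ at $f^{i}(\zeta)$ --- gives $\bigcup_{k}F^{k}(f^{i}(D))\supseteq\c\setminus E(f)\supseteq K$; since this union is increasing and $K$ is compact, there is $\kappa_{i}$ with $F^{k}(f^{i}(D))\supseteq K$ for all $k\ge\kappa_{i}$.

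To close the argument I feed $U$ into this picture: since $\zeta\notin E(f)$, Stage 1 applied to $U$ gives $\zeta\in f^{n_{0}}(U)$ for some $n_{0}\ge1$, so after shrinking $\rho$ we have $D\subseteq f^{n_{0}}(U)$. Then for $m\ge n_{0}$, writing $m-n_{0}=i+p\ell$ with $0\le i<p$,
$$f^{m}(U)\supseteq f^{m-n_{0}}(D)=f^{i}\big(F^{\ell}(D)\big)=F^{\ell}\big(f^{i}(D)\big)\supseteq K\quad\text{as soon as }\ell\ge\kappa_{i},$$
which yields $f^{n}(U)\supseteq K$ for every $n\ge n(K):=n_{0}+\max_{0\le i<p}(i+p\,\kappa_{i})$.

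The step I expect to be most delicate is the bookkeeping in Stage 2: turning the expansion at the repelling cycle into the displayed inclusion forces one to track the $p$ translates $f^{i}(D)$ and the residues modulo $p$, precisely because the absorbing disk sits at a periodic rather than a fixed point; one also has to invoke the standard identities $\j(f^{p})=\j(f)$ and $E(f^{p})=E(f)$. Beyond Montel's theorem, the only substantive input is the existence of repelling periodic points of transcendental entire functions (Baker); everything else is elementary set-chasing.
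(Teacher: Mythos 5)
Your proposal is correct, but there is nothing in the paper to compare it against in detail: the authors do not prove this lemma at all, they import it verbatim as Lemma 2.2 of Baker's paper \cite{baker15}. What you have written is essentially a self-contained reconstruction of the classical argument behind that citation: Montel's theorem gives the blow-up property $\bigcup_{n}f^{n}(U)\supseteq\c\setminus E(f)$, and the passage from ``$K$ is covered by some iterate'' to ``$K$ is covered by every large iterate'' is done, exactly as in the standard proofs, by planting an absorbing disk $D$ with $f^{p}(D)\supseteq\overline{D}$ at a non-exceptional repelling periodic point (Baker's density of repelling cycles) and exploiting that the sets $F^{k}(f^{i}(D))$, $F=f^{p}$, increase in $k$; your handling of the residues modulo $p$ and the final choice of $n(K)$ check out. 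Two minor points would deserve a line in a written-up version. First, the identity $E(f^{p})=E(f)$ is genuinely used (you need $K\cap E(F)=\emptyset$, which the hypothesis only gives for $E(f)$); it is true, and follows quickly from Picard's theorem: for a transcendental entire $g$, a point $a$ with finite backward orbit must satisfy $g^{-1}(a)\subseteq\{a\}$, since every point of a finite backward-invariant set is assumed only finitely often and $g$ has at most one such value; applying this to $g=f^{p}$ and then to the intermediate preimage levels $f^{-j}(a)$, $1\le j<p$, shows each such level is at most a single point, so the full $f$-backward orbit of $a$ is finite. Second, you shrink $\rho$ only after the constants $\kappa_{i}$ have nominally been introduced; the choices should be ordered as: find $n_{0}$ with $\zeta\in f^{n_{0}}(U)$, then take $\rho$ small enough that both $f^{p}(D)\supseteq\overline{D}$ and $D\subseteq f^{n_{0}}(U)$ hold, and only then define the $\kappa_{i}$. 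Neither point is a gap, just bookkeeping; the proof is sound and matches the standard (Baker-style) route the paper relies on.
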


An exceptional point is a point with finite backward orbit. Note that a transcendental entire function has at most one exceptional point. In our case, exponential functions have $0$ as an exceptional point. 

\medskip
To begin with the proof, let us put $D:=D(\lambda_0, r)$. Then by Lemma \ref{scale} for all $r>0$ sufficiently small one can find $n$ such that $\diam\xi_n(D)\geq S$ and
$$\xi_{n}(D)\supset D(\xi_{n}(\lambda_0), S/4)=:V.$$

Now we consider the annulus $A:=A(\Delta/4, \Delta)$, centered at the origin with inner and outer radii $\Delta/4$ and $\Delta$ respectively. The closure of $A$ is denoted by $\overline{A}$. Then by the above Lemma \ref{blowup}, there exists $N$ such that $f_{\lambda_0}^{N}(V)\supset \overline{A}$. Put $A'=A(\Delta/2, 3\Delta/4)$. Since $N$ is a finite number depending only on $f_{\lambda_0}$ and $V$, we have, by decreasing $r$ if necessary, that for all $\lambda\in \xi_{n}^{-1}(V)$,
$$f_{\lambda}^{N}(V)\supset A'.$$
With $V'=\{z\in V: f_{\lambda}^{N}(z)\in A'\}$, one can find  a positive number $c>0$ such that
$$\meas V'\geq c\meas V.$$

Let $\widehat{D}$ be the set of parameters in $D$ which are mapped into $V'$ by the map $\xi_n$. Then by Lemma \ref{stron}, there exists a constant $c'>0$ depending on $c$ such that
$$\meas \widehat{D}\geq c'\meas D.$$
Therefore, there is a definite portion of parameters $\lambda$ in $D$ for which $\xi_{n+N}(\lambda)\in A'$. In other words, such $\lambda$'s are not $\Delta$-non-recurrent. This ends the proof of Proposition \ref{props}.

\section{Non-recurrence and hyperbolicity}

In the section we prove approximation of non-recurrent parameters by hyperbolic ones. One of the key ingredients is to control derivatives along the  singular orbit up to at least time $n$, which is the time reaching the large scale.

We use some notations from the previous section. That is, we use $D:=D(\lambda_0, r)$ and $V=D(\xi_{n}(\lambda_0), S/4)$. Let $D'\subset D$ be such that $\xi_n(D')=V$. Let $n$ be as in Lemma \ref{scale}. Then by this lemma, $\xi_{n}(D)$ has diameter at least $S$ and contains $V$. Moreover, it is almost round due to the strong distortion ensured by Lemma \ref{stron}. Now we consider the following disk
$$D_1:=D(z_1, 1)$$
such that
$$
\Im(z_1)=-i\arg(\lambda_0)\quad\text{and}\quad\re(z_1)\geq M,
$$
where $\arg(\lambda_0)\in [0,2\pi)$ and $M>0$ is some sufficiently large number. Then similarly as above, Lemma \ref{blowup} ensures the existence of a number $N$ such that $f_{\lambda_0}^{N}(V)\supset D_1$.

Now we see that $f_{\lambda_0}(D_1)$ is a large set which intersects with the horizontal line $\mathcal{L}:=\{z: \Im(z)=-i\arg(\lambda_0)\}$ knowing that $M$ is chosen to be large at the beginning. So we can choose a disk
$$D_2:=D(z_2, 1)\subset f_{\lambda_0}(D_1)$$
such that $z_2\in \mathcal{L}$ and $\re z_2\geq e^{M/2}>\re z_1$.
Continuing in a similar way, we obtain a sequence of points $z_n$ such that $z_n\in \mathcal{L}$ and $\re z_n>\exp(\re z_{n-1}/2)$. Moreover, with $D_n:=D(z_n,1)$ we have $f_{\lambda_0}(D_{n})\supset D_{n+1}$. By choosing $z_n$ in this way, we can actually make sure that $\re z_n$ grows exponentially.

 With 
$$x_0:=\max_{j\leq n}\left\{\re(z):\, z\in\zeta_j(D)\right\}+2S,$$
there exists a smallest integer $p$ such that
$$
\re(z_p)\geq x_0.
$$
Then we see that $f_{\lambda_0}(D_p)$ will contain a disk $D(z_{p+1}, 3/2)$ satisfying 
\begin{equation}\label{zp+1}
z_{p+1}\in\mathcal{L}+\pi i\quad\text{and}\quad \re(z_{p+1})>\exp(\re(z_p)/2).
\end{equation}
Here $\mathcal{L}+\pi i$ is the translation of $\mathcal{L}$ by $\pi i$.

The above construction tells that $f_{\lambda_0}^{N+p}(V)\supset D(z_{p+1},3/2)$. By decreasing $r$ (if necessary) we can make sure that for all $\lambda\in D'$,
$$
f_{\lambda}^{N+p}(V)\supset D_{p+1}=D(z_{p+1}, 1).
$$

 Let $\widehat{V}$ be the component of  $f_{\lambda}^{-(N+p)}(D_{p+1})$ that is contained in $V$ and let $\widehat{D}$ be the set of points in $D'$ which are mapped into $\widehat{V}$ under the map $\xi_n$. In other words, $\widehat{D}=\xi_{n}^{-1}(\widehat{V})\subset D$.

\medskip
Now we claim that parameters in $\widehat{D}$ are hyperbolic parameters. This gives the desired approximation of non-recurrent parameters by hyperbolic ones, since $\widehat{D}\subset D'\subset D(\lambda_0, r)$ and $r$ can be chosen arbitrarily small.

\smallskip
For any $\lambda\in \widehat{D}$,  we see that $f_{\lambda}^{n+N+p}(0)\in D_{p+1}$. Now we consider a disk $B:=D(f_{\lambda}^{n+N+p}(0),1)$ centered at $f_{\lambda}^{n+N+p}(0)$ of radius $1$. By Koebe's one-quarter theorem we have
$$f_{\lambda}^{-(n+N+p)}\left(B\right)\supset D\left(0,\frac{1}{8\left|Df_{\lambda}^{n+N+p}(0)\right|}\right),$$
where we choose the inverse branch sending $f_{\lambda}^{n+N+p}(0)$ to $0$. 
On the other hand, the image $f_{\lambda}(B)$ lies in some left half-plane by the choice of $z_{p+1}$; see \eqref{zp+1}. More precisely,
$$f_{\lambda}(B)\subset\left\{z: \re(z)\leq - |\lambda|e^{\re (z_{p+1})-1}\right\}.$$
With
$$r_1:=\frac{1}{8\left|Df_{\lambda}^{n+N+p}(0)\right|}\quad\text{and}\quad r_2:=\exp\left\{- |\lambda|e^{\re (z_{p+1})-1}\right\},$$
we want to show that $r_1> r_2$, which means that a small disk around the singular value $0$ is mapped by $f_{\lambda}^{n+N+p+2}$ into itself. In other words, $f_{\lambda}$ has an attracting cycle and thus is a hyperbolic map. Noting that
$$
\begin{aligned}
\left|Df_{\lambda}^{n+N+p}(0)\right|=\prod_{j=0}^{n+N+p-1}\left|f'_{\lambda}(f_{\lambda}^{j}(0))\right|&=\prod_{j=0}^{n+N+p-1}\left|\lambda e^{\re f_{\lambda}^{j}(0) }\right|\\
&\leq |\lambda|^{n+N+p}e^{(n+N+p)\re (z_p)},
\end{aligned}
$$
one can obtain the following estimate for $r_1$:
$$r_1  \geq \frac{1}{8|\lambda|^{n+N+p}e^{(n+N+p)\re (z_p)}}.$$
On the other hand, since $\re(z_{p+1})=c'' |z_{p+1}|=c'' |\lambda|e^{\re(z_p)}$ for a small constant $c''$, we get that
$$r_2\leq\frac{1}{e^{|\lambda|\exp(c''|\lambda|\exp(\re(z_p)-1))}}.$$
This means that $r_2$ is much smaller than $r_1$. So we see that there is a neighborhood $U$ of $0$ which is mapped by $f_{\lambda}^{n+N+p+2}$ into itself. Therefore, $f_{\lambda}$ is a hyperbolic map.

This completes the proof of Theorem \ref{maintheorem2}.

\medskip
\begin{ack}
We would like to thank Lasse Rempe for useful comments and remarks, and Mats Bylund for some corrections. The second author was partially supported by Vergstiftelsen. We thank the referee for a detailed reading of the manuscript and valuable comments and suggestions.
\end{ack}

\bigskip

\noindent \emph{Magnus Aspenberg}: Centre for Mathematical Sciences, Lund University, Box 118, 22 100 Lund, Sweden
 
\smallskip
{magnus.aspenberg@math.lth.se}

\bigskip
\noindent \emph{Weiwei Cui}: 
Centre for Mathematical Sciences, Lund University, Box 118, 22 100 Lund, Sweden

\smallskip
{weiwei.cui@math.lth.se}
\end{document}